\documentclass[12pt]{amsart}
\usepackage{mathrsfs}
\usepackage{amsmath}
\usepackage{latexsym}
\usepackage{amssymb}
\usepackage{amsfonts}
\usepackage{multirow}
\usepackage{float}
\usepackage{enumerate}
\include{PDF}
\usepackage{graphics}

\oddsidemargin=0.4in
\evensidemargin=0.4in
\topmargin=-0.2in
\textwidth=15cm
\textheight=23.5cm 

\parskip 2pt
 
\def\l{\langle} \def\r{\rangle} 
 
 \def\ZZ{\mathbb Z}

\def\mod{{\sf mod~}}

\def\D{{\rm D}} \def\Q{{\rm Q}}

\def\Z{{\bf Z}} 

\def\Ome{{\it \Omega}}

 \def\GL{{\rm GL}}\def\PG{{\rm PG}}

\def\A{{\rm A}}\def\Sym{{\rm Sym}}

\def\PSL{{\rm PSL}}\def\PGL{{\rm PGL}}
\def\GL{{\rm GL}} \def\SL{{\rm SL}}
\def\AGL{{\rm AGL}}

  \def\D{{\rm D}}

  \def\Sym{{\rm Sym}}
  
\def\D{\mathrm{D}} \def\Q{\mathrm{Q}} \def\A{\mathrm{A}}       
 \def\PG{\mathrm{PG}}
\def\GL{\mathrm{GL}}   \def\SL{\mathrm{SL}}     \def\PGL{\mathrm{PGL}} \def\PSL{\mathrm{PSL}}         \def\AGL{\mathrm{AGL}}

\def\o{\omega}
\newcommand{\bb}{\mathbb}
\newcommand{\mf}{\mathfrak}

\newtheorem{theorem}{Theorem}[section]
\newtheorem{definition}{Definition}[section]
\newtheorem{remark}{Remark}[theorem]

\newtheorem{lemma}[theorem]{Lemma}
\newtheorem{corollary}[theorem]{Corollary}
\newtheorem{example}[theorem]{Example}

\newtheorem{problem}[theorem]{Problem}
\newtheorem{construction}[theorem]{Construction}
\newtheorem*{theorem*}{Theorem}

\setcounter{section}{0}
\def\qed{{\hfill$\Box$\bigskip}
\medbreak}

\begin{document}

\title{Erd\H{o}s-Ko-Rado problems for permutation groups}
\thanks{This work was partially supported  by NSFC grants 11771200, 11931005 and 61771019}

\author{Cai Heng Li}
\address{SUSTech International Center for Mathematics\\
Department of Mathematics\\
Southern University of Science and Technology\\
Shenzhen, Guangdong 518055\\
P. R. China}
\email{lich@sustech.edu.cn}

\author{Shu Jiao Song}
\address{School of Mathematics\\
Yantai University\\
Yantai, Shandong}
\email{shujiao.song@hotmail.com}

\author{Venkata Raghu Tej Pantangi}
\address{Department of Mathematics\\
Southern University of Science and Technology\\
Shenzhen, Guangdong 518055\\
P. R. China}
\email{pantangi@sustech.edu.cn}

\begin{abstract}
In this paper, we study intersecting sets in primitive and quasiprimitive permutation groups. Let $G\leqslant\Sym(\Ome)$ be a transitive permutation group, and $S$ an intersecting set. Previous results show that if $G$ is either 2-transitive or a Frobenius group, then $|S|\leq |G_{\o}|$ (for some $\o \in \Ome$). Furthermore, for some 2-transitive groups, $|S|=|G_{\o}|$ if and only if $S$ is a coset of a stabilizer.
In this paper, we prove that these statements are far from the truth for general transitive groups. In particular, we show that in the case of primitive groups, there is even no absolute constant $c$ such that $|S|\leqslant c|G_\o|$. In the case $G$ is a primitive permutation group isomorphic to $\PSL(2,p)$, we characterize the subgroups of $G$ which are intersecting sets. We also show that if $G \leqslant \Sym(\Ome)$ is a permutation group of prime power degree, then for any intersecting set $S$, we have $|S|\leq |G_{\o}|$ (for some $\o \in \Ome$). This proves a part of a conjecture in \cite{MRS}.
\end{abstract}

\date\today

\maketitle

\section{Introduction}
Let $G$ be a finite permutation group on $\Ome$. 
A pair of permutations $x,y \in G$ are said to be {\it intersect} if the intersection $\{(\o,\o^{x})| \ \o \in \Ome\}\cap \{(\o,\o^{y})| \ \o \in \Ome\}$ is non-empty, in other words, there exists a point $\alpha \in \Ome$ such that $\alpha^{xy^{-1}}=\alpha$.
 A subset $S$ of $G$ is said to be an {\it intersecting set} if any pair of permutations in $S$ intersect. An intersecting set is said to be {\it maximum} if it is an intersecting set of maximum possible size. 
 
Cosets of point stabilizers are some obvious examples of intersecting subsets. Thus the size of a maximum intersecting set is at least $|G_{\o}|$ (for some $\o \in \Ome$).
It is now natural to consider the following questions:
\begin{enumerate}[(A)]
\item Is the size of every intersecting set bounded above by the order of a point stabilizer?
\item Is every maximum intersecting set a coset of a point stabilizer?
\end{enumerate} 

In 1977, Deza and Frankl \cite{DF1977} showed that, for symmetric group of degree $n$, the answer to question (A) is positive, that is, the size of any intersecting set is at most $(n-1)!$. They conjectured that every maximum intersecting set is a coset of a point stabilizer. This conjecture was proved by Cameron-Ku (\cite{CK2003}) and Larose-Malvernuto (\cite{LM2004}) independently. Later in \cite{GM2009}, Godsil and Meagher gave another proof.

Deza-Frankl's work was motivated by a classical result of Erd\H{o}s-Ko-Rado in extremal set theory, which states that, for a collection $\mathfrak{S}$ of $k$-subsets of a set $\Ome$ of size $n$ with $k<n/2$, if any two members of $\mathfrak{S}$ intersect, then
\[|\mathfrak{S}|\leqslant {n-1\choose k-1},\] and equality holds if and only if $\mathfrak{S}$ consists of all $k$-subsets of $\Ome$ which contains a common element. The above mentioned questions (A) and (B), and the classical Erd\H{o}s-Ko-Rado theorem lead to the following definition.  

\begin{definition}
{\rm
A permutation group $G$ on $\Ome$ is said to have the {\it EKR property} if its point stabilizers are maximum intersecting sets. The group is further said to have the {\it strict-EKR property} if cosets of point stabilizers are the only maximum intersecting sets in $G$.
}
\end{definition}

A few classes of permutation groups satisfying the strict-EKR property are as follows: (a)Alternating groups of degree $n$ (c.f \cite{AM2014}); (b)the groups $\PGL(2,q)$ (c.f \cite{MS2011}) and  $\PSL(2,q)$ (c.f \cite{LPSX2018}) acting on $1$-spaces . It was shown in \cite{MST2016} that all $2$-transitive permutation groups have the EKR property. Not all $2$-transitive permutation groups have the strict-EKR property, for example the affine group $\AGL(1,p^{d})=\bb{Z}_{p}^{d}{:}\bb{Z}_{p^{d}-1}$ is $2$-transitive but does not have the strict-EKR property unless $(p,d)=(3,1)$. A recent result \cite{MSi2019}, shows that the characteristic vector of any maximum intersecting set of $2$-transitive groups is a linear combination of characteristic vectors of cosets of point stabilizers. Groups with this property are said to have the {\it EKR-Module} property. Theorem 3.6 of \cite{AM2015} states that all Frobenius groups have the EKR property and also that generalized dihedral groups are the only Frobenius groups satisfying the strict-EKR property.

In this paper we show that general transitive groups are ``far from satisfying the EKR property''. As means of measuring how ``far'' a group is from satisfying the EKR property, we define the following quantity.

\begin{definition}\label{ratio}
{\rm
Given a transitive permutation group $G$ on $\Ome$, by $\rho(G,\Ome)$, we denote the ratio $|S|/|G_{\o}|$, where $S$ is a maximum intersecting set and $\o \in \Ome$.
}
\end{definition}

 Since $G_{\o}$ is an intersecting set, we have $\rho(G,\Ome)\geqslant 1$. A group $G$ satisfies the EKR property if and only if $\rho(G,\Ome)=1$. The main result of  \cite{MST2016} states that all $2$-transitive groups satisfy the EKR property and so $\{\rho(G):\ G\ \text{is $2$-transitive}\}=\{1\}$. We show that $\rho(G,\Ome)$ can be arbitrarily large for general permutation groups.

\begin{theorem}\label{no-bound}
Given $M>0$, there is a transitive permutation group $G \leqslant \Sym(\Ome)$ and an intersection set $S \subseteq G$ such that $|S| >M |G_{\o}|$, where $G_{\o}$ is the stabilizer of a point $\o \in \Ome$.
\end{theorem}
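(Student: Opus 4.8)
The plan is to produce, for each $M>0$, a single faithful transitive action whose ratio $\rho(G,\Ome)$ (Definition~\ref{ratio}) already exceeds $M$; the idea is to take $G$ to be an affine group and to let the point stabiliser be a very small subgroup of the translation module, so that the whole translation module is already an intersecting set and the gap between its size and that of the point stabiliser can be made as large as we wish. Concretely, I would fix a prime $p$ and an integer $d\geq 2$, let $V=\FF_p^d$ be the natural module for $L:=\GL(d,p)$, and set $G:=\AGL(d,p)=V\rtimes L$, with $V$ identified with the translation subgroup. Choose $v\in V\setminus\{0\}$ and put $H:=\langle v\rangle$, a subgroup of order $p$. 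The first thing to check is that $H$ is core-free, so that the coset action of $G$ on $\Ome:=G/H$ is faithful, transitive, and has point stabilisers of order $p$: since $L$ is transitive on $V\setminus\{0\}$, the $L$-module $V$ is irreducible, so its only $L$-invariant subgroups are $\{0\}$ and $V$, and hence a normal subgroup of $G$ lying inside $H$ is an $L$-invariant subgroup of $V$ of order at most $p<|V|$ and is therefore trivial; thus $\mathrm{Core}_G(H)=1$.

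The key step is to observe that the translation subgroup $V\trianglelefteq G$ is itself an intersecting set for this action. Two elements $x,y\in G$ intersect if and only if $xy^{-1}\in\bigcup_{g\in G}gHg^{-1}$; for $x,y\in V$ one has $xy^{-1}\in V$, and since conjugation by $L$ permutes the translations according to the natural $L$-action on $V$, the $G$-conjugates of $v$ are exactly the nontrivial elements of $V$, so
\[
\bigcup_{g\in G}gHg^{-1}\;=\;\{1\}\cup\bigl(V\setminus\{0\}\bigr)\;=\;V .
\]
Hence any two elements of $V$ intersect, so $S:=V$ is an intersecting set with $|S|=p^d$, while $|G_\o|=|H|=p$, and consequently $\rho(G,\Ome)\geq p^{d-1}$. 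Given $M>0$ it then suffices to pick $d\geq 2$ with $p^{d-1}>M$, for then $|S|=p^d=p^{d-1}|G_\o|>M|G_\o|$, as required.

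I do not expect a genuinely hard step here: once one decides to make the point stabiliser as small as possible relative to a normal subgroup that is covered by its own conjugates, the affine groups are the natural candidates, and the only delicate point is faithfulness of the coset action, which is exactly where irreducibility of $V$ is used. (Equivalently, it is enough to find one transitive action with $\rho>1$ — for instance $(p,d)=(2,2)$ gives $S_4$ acting on the $12$ cosets of a subgroup generated by a double transposition, with $\rho=2$ — and then to amplify via product actions, using $\rho(T^k,\Delta^k)\geq\rho(T,\Delta)^k$; the affine family simply carries out this amplification in a single step.)
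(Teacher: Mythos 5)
Your proposal is correct and follows essentially the same route as the paper's Example~\ref{AGL-ex}: both take $G=\AGL(d,p)$ acting on the cosets of a small subgroup meeting the translation module, and both use the transitivity of $\GL(d,p)$ on $V\setminus\{0\}$ together with Lemma~\ref{Int-subgp} to show that the full translation module is an intersecting set of size $p^{d-1}$ times the stabiliser. The only (cosmetic) difference is your choice of stabiliser $\ZZ_p=\langle v\rangle$ in place of the paper's $\D_{2p}=\langle x,y\rangle$, which slightly simplifies matters and removes the need for $p$ to be odd.
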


This is proved by considering the groups in Example \ref{AGL-ex}. It is now natural to be curious about the upper bound of $\rho(G,\Ome)$. In Construction \ref{nobo}, we construct an infinite family of permutation groups such that for any $(G,\Ome)$ in this family, we have $\rho(G,\Ome)\leq \sqrt{|\Ome|}$. Moreover, the bound of $\sqrt{|\Ome|}$ is tight for this family.  
In a previous version of this article (\cite[Conjecture 1.2]{LSP}), we conjectured that $\sqrt{|\Ome|}$ would be the best possible bound for all transitive permutation groups. However a recent result \cite{MRS} found counterexamples to this conjecture. One of the main results of the paper is:

\begin{theorem*}[Razafimahatratra-Meagher-Spiga]
If $G\leqslant \Sym(\Ome)$ is a transitive permutation group with $|\Ome|>3$, then $\rho(G,\Ome)\leq |\Ome|/3$.
\end{theorem*}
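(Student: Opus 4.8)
The plan is to translate the statement into a question about the \emph{derangement graph} and reduce it to finding a small clique. Let $D\subseteq G$ be the set of derangements; this is a union of conjugacy classes, hence a conjugation-closed, inverse-closed subset of $G$ with $1\notin D$, so the Cayley graph $\Gamma=\Cay(G,D)$ is a well-defined vertex-transitive graph on $|G|$ vertices in which $x$ and $y$ are adjacent precisely when $xy^{-1}\in D$. By the definition of intersecting, a subset $S\subseteq G$ is intersecting if and only if it is an independent set of $\Gamma$. Since $|G_{\o}|=|G|/|\Ome|$ by the orbit--stabilizer theorem, the inequality $\rho(G,\Ome)\le|\Ome|/3$ is equivalent to $\alpha(\Gamma)\le|G|/3$, and the clique--coclique bound for vertex-transitive graphs, $\alpha(\Gamma)\,\omega(\Gamma)\le|G|$, reduces this to showing that $\Gamma$ contains a triangle. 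Translating a triangle so that one of its vertices is the identity, this is exactly the assertion that there exist $x,y\in D$ with $xy\in D$; that is, the derangement set of a transitive group of degree larger than $3$ is never product-free.

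I would build the required triangle by three mechanisms. \emph{Cycle structure}: a power $g^{k}$ of a permutation $g$ is a derangement if and only if no cycle length of $g$ divides $k$, so if $G$ has a derangement $g$ of order at least $3$ none of whose cycles is a transposition, then $g^{2}\in D$ and $\{1,g,g^{2}\}$ is a triangle; in particular a derangement of odd prime order, or one all of whose cycles share a common length $\ell\ge3$, produces a triangle. \emph{Lifting along blocks}: if $G$ is imprimitive with a block system $\mathcal B$ and the induced transitive group $G^{\mathcal B}$ contains a triangle in its own derangement graph, then any preimages in $G$ of the three vertices form a triangle of $\Gamma$, because their pairwise quotients map to fixed-block-free elements of $G^{\mathcal B}$ and hence have no fixed point. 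Since every transitive group of degree $3$ has such a triangle, induction on the degree reduces us to the case where $G$ is primitive, or $G$ is imprimitive but every one of its block quotients has degree $2$. In this last case $G$ has an index-$2$ subgroup $N$ whose two orbits are the blocks $B$ and $B'$, the non-trivial coset of $N$ consists entirely of block swaps (all of which are derangements), and, since $N$ acts transitively on $B$ with $|B|\ge2$, it contains an element with no fixed point on $B$; the goal is then to combine such an element with a block swap to span a triangle.

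The main obstacle is the primitive case, together with the residual subcase of the index-$2$ situation. For primitive $G$ of degree larger than $3$ I would invoke the (CFSG-based) structural theory of derangements in primitive groups: it guarantees a derangement of prime power order always, and a derangement of prime order in all but a classifiable finite list of cases, so the cycle-structure mechanism applies directly when that prime is odd, while the characteristic-$2$ configurations (for instance affine groups over $\FF_{2}$, where the product of two distinct translations is again a derangement) and the small-degree exceptions are handled separately. The index-$2$ subcase is genuinely delicate: were $N$ to contain no derangement of $\Ome$ at all, then $N$ itself would be an intersecting set of size $|G|/2>|G|/3$, so one must actually prove that $N$ always contains a global derangement --- equivalently, using that a block swap $t$ conjugates the action of $N$ on $B$ to its action on $B'$, that there is $h\in N$ with both $h$ and $h^{t}$ fixed-point-free on $B$ --- and then that $h$, possibly together with a block swap, spans a triangle. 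I expect this to require a careful comparison of the two actions of $N$ and a census of the small-degree exceptions, and it cannot be bypassed spectrally, since in the potential bipartite situation the Hoffman ratio bound gives only $\alpha(\Gamma)\le|G|/2$. (A tempting shortcut is to combine the lower bound $|D|\ge|\Ome|-1$ --- which follows from $\sum_{g\in G}\pi(g)=|G|$ for the permutation character $\pi$, since every non-identity element with a fixed point contributes at least $1$ --- with Kneser's theorem to contradict product-freeness of $D$ directly; but controlling the Kneser stabiliser of the product set $DD$ seems to reintroduce case analysis, so I would keep this only as a backup.)
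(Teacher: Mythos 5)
First, a point of comparison: the paper does not prove this statement at all --- it is quoted from \cite{MRS} (``On triangles in derangement graphs''), so there is no internal proof to measure you against. Your reduction is nonetheless exactly the strategy of that cited source: identify intersecting sets with independent sets of the derangement graph $\Gamma=\Cay(G,D)$, use $|G_\o|=|G|/|\Ome|$, and apply the clique--coclique bound $\alpha(\Gamma)\,\omega(\Gamma)\le|G|$ for vertex-transitive graphs, so that everything reduces to exhibiting a triangle, i.e.\ derangements $x,y$ with $xy$ again a derangement. The cycle-structure mechanism and the lifting of triangles through block systems are correct as stated, and the degree-$3$ base case does hold for both $\ZZ_3$ and $\mathrm{S}_3$, so the imprimitive reduction is sound as far as it goes.

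As a proof, however, the proposal has two genuine holes, both of which you flag but neither of which you close. (1) In the primitive case you invoke the CFSG-based existence of derangements of prime or prime-power order and then say the characteristic-$2$ configurations and the finitely many exceptional groups are ``handled separately''; that is precisely where the real work lies, and none of it is done. A derangement of $2$-power order does not yield a triangle by the powering trick (its square may have fixed points), and the primitive groups lacking derangements of odd prime order must actually be listed and checked. (2) The residual imprimitive case, in which every proper block quotient has degree $2$, is left entirely open: you correctly observe that one must first show the index-$2$ subgroup $N$ contains a global derangement --- otherwise $N$ would be an intersecting set of size $|G|/2$ and the theorem would be false --- and then that such an element extends to a triangle, but you supply no argument for either step, and as you note the ratio bound only gives $|G|/2$ here, so this case cannot be bypassed. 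The proposal is therefore a correct and well-motivated programme that matches the published approach, but it is not yet a proof.
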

Using an exhaustive computer search on the database of permutation groups of degree at most $48$, the authors of \cite{MRS} found four examples of permutations group with $\rho(G,\Ome)=|\Ome|/3$. Moreover in three of these examples, $\rho(G,\Ome)>\sqrt{|\Ome|}$. In \ref{lisg}, we describe these examples and two other groups with $\rho(G,\Ome)>\sqrt{|\Ome|}$. We make use of wreath products and the five examples in \ref{lisg} to produce infinite families of  permutation groups with $\rho(G,\Ome) \geqslant \sqrt{|\Ome|}$. Since wreath products were essential to our constructions of infinite families with ``large'' intersecting subgroups, we divide all permutation groups into two categories.
A permutation group $G\leqslant \Sym(\Ome)$ is said to have a {\it product decomposition}, if there exists permutation groups $H \leqslant \Sym(\Delta)$ and $K \leqslant \Sym(\Theta)$ such that $G$ is permutation equivalent  to $H{\wr} K \leqslant \Sym(\Delta^\Theta)$. The groups mentioned in \ref{lisg} do not have a product decomposition. So far, in both this article and \cite{MRS}, there are only finitely many examples of permutation groups $G\leqslant \Sym(\Ome)$ which do not admit a product decomposition and satisfy $\rho(G,\Ome)> \sqrt{|\Ome|}$.      

\begin{theorem}\label{sqo}
Given $M>0$ and $\epsilon \in (0,1)$, there exists a permutation group $G\leqslant \Sym(\Ome)$ which does not admit a product decomposition, such that $\rho(G,\Ome)>M$ and $(1-\epsilon)\sqrt{|\Ome|} <\rho(G)<\sqrt{|\Ome|}$.
\end{theorem}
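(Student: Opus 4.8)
The plan is to combine the five exceptional examples listed in the section referred to as \ref{lisg} with an iterated wreath product construction, but to do so in a way that the resulting group never acquires a genuine product decomposition. First I would recall the behaviour of $\rho$ under wreath products. If $H\leqslant\Sym(\Delta)$ has a large intersecting set $S_H$ witnessing $\rho(H,\Delta)$, then in $G=H\wr K$ acting on $\Delta^{\Theta}$ one builds an intersecting set by taking, in the base group $H^{\Theta}$, the product of copies of $S_H$ in every coordinate and then multiplying by a transversal of a point stabiliser of $K$; the key point, already implicit in the discussion of Construction \ref{nobo}, is that $\rho$ is (sub)multiplicative, so $\rho(H\wr K,\Delta^{\Theta})$ grows like $\rho(H,\Delta)^{|\Theta|}$ while the degree only grows like $|\Delta|^{|\Theta|}$. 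Choosing $H$ to be one of the examples with $\rho(H,\Delta)>\sqrt{|\Delta|}$ and iterating $n$ times, $\rho$ will exceed $M$ once $n$ is large, and the ratio $\rho(G,\Ome)/\sqrt{|\Ome|}$ will behave like $(\rho(H,\Delta)/\sqrt{|\Delta|})^{n}$, which tends to infinity — so by itself wreathing overshoots $\sqrt{|\Ome|}$ rather than landing in the window $((1-\epsilon)\sqrt{|\Ome|},\sqrt{|\Ome|})$.

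The second ingredient, therefore, is a damping step: after wreathing to push $\rho$ above $M$, I would take a further direct-product-free ``blow-up'' that increases the degree without increasing $\rho$ proportionally, so as to bring the ratio $\rho/\sqrt{|\Ome|}$ back down below $1$ and then, by a continuity/density argument, inside $((1-\epsilon),1)$. Concretely, one can replace one coordinate action $H\leqslant\Sym(\Delta)$ by an action of $H$ on a larger $H$-set $\Delta'$ (for instance on cosets of a smaller subgroup, or on $\Delta\times\Delta$ via a transitive but non-product action) for which the maximum intersecting set does not grow as fast as $\sqrt{|\Delta'|}$; tuning the number of wreath factors $n$ and the choice of the enlarged coordinate set gives a discrete family of achievable pairs $(\rho,|\Ome|)$ whose ratios $\rho/\sqrt{|\Ome|}$ are dense enough in $(0,1)$ to hit the interval $(1-\epsilon,1)$ while keeping $\rho>M$. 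Throughout, I would verify that the groups produced do not admit a product decomposition: a wreath product $H\wr K$ trivially does, so the construction cannot stop at a plain wreath product. The fix is to pass to a primitive or quasiprimitive overgroup — e.g.\ take the primitive action of $(H\wr K)$ in product action is still a wreath product, so instead one embeds the wreathed group as a point stabiliser, or as a subgroup of the holomorph, or uses the standard device (as with the examples in \ref{lisg}) of replacing $H\wr K$ on $\Delta^{\Theta}$ by a transitive group on $\Delta^{\Theta}$ that contains no invariant product structure, such as $H\wr K$ extended by a cyclic shift of the coordinates when $\Theta$ is acted on regularly, or by taking an appropriate subgroup acting imprimitively but not as a product.

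The key steps, in order, are: (1) prove the submultiplicativity estimate $\rho(H\wr K,\Delta^{\Theta})\geq\rho(H,\Delta)^{|\Theta|}\cdot\rho(K,\Theta)$ (or at least the lower bound with $\rho(K,\Theta)$ replaced by $1$), which only requires exhibiting the explicit intersecting set described above and checking it is intersecting — two elements intersect in the base group iff they intersect in some coordinate, and the transversal-of-$K_\theta$ factor handles the quotient; (2) plug in an exceptional $H$ from \ref{lisg} with $\rho(H,\Delta)/\sqrt{|\Delta|}=:\lambda>1$, so that after $n$ iterations $\rho/\sqrt{|\Ome|}\asymp\lambda^{\,(\text{number of coordinates})}$, which can be made as large as we wish; (3) introduce the damping coordinate(s) with ratio $<1$ and solve the resulting elementary inequality in the integer parameters to land $\rho/\sqrt{|\Ome|}$ in $(1-\epsilon,1)$ while keeping $\rho>M$; (4) remove the product decomposition by passing to a suitable transitive (ideally quasiprimitive) overgroup or subgroup on the same point set and check that $\rho$ and $|\Ome|$ are essentially unchanged — here one must make sure that enlarging or shrinking the group does not destroy the lower bound on $\rho$ (enlarging $G$ can only shrink $\rho$, since point stabilisers grow; shrinking $G$ can shrink the maximum intersecting set). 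I expect step (4) — ensuring the final group genuinely has no product decomposition while still controlling $\rho(G,\Ome)$ from both sides — to be the main obstacle, since all the easy constructions that boost $\rho$ are wreath products, and one has to argue that the particular transitive action chosen to replace the wreath product admits no $G$-invariant Cartesian decomposition of $\Ome$, typically by an O'Nan--Scott-type analysis of the possible block systems, and simultaneously that its maximum intersecting set is still large enough. A secondary, more technical, difficulty is the arithmetic of step (3): the attainable ratios $\rho/\sqrt{|\Ome|}$ form a discrete set, so one needs enough flexibility in the choice of damping coordinate (e.g.\ a one-parameter family of $H$-sets of strictly increasing size with intersecting-set ratio tending to $0$) to guarantee density in $(0,1)$, and hence to hit the prescribed window for every $\epsilon$.
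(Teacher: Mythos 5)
Your proposal has two genuine gaps, and both stem from the fact that the wreath-product machinery you are building is the paper's tool for Theorem~\ref{3bou} (groups \emph{admitting} a product decomposition), not for Theorem~\ref{sqo}.

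First, the requirement that $G$ admit no product decomposition is never actually met. Every group you construct in steps (1)--(3) is of the form $H\wr K$ on $\Delta^{\Theta}$ and therefore admits a product decomposition by definition. Your step (4) --- passing to an overgroup, a subgroup, or the holomorph to destroy the product structure while preserving both the lower bound $\rho>M$ and the two-sided control on $\rho/\sqrt{|\Ome|}$ --- is precisely the content of the theorem, and you correctly identify it as the main obstacle but give no construction that resolves it. As it stands the argument proves only a variant of Theorem~\ref{3bou}.

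Second, you need the strict upper bound $\rho(G,\Ome)<\sqrt{|\Ome|}$, which is a statement about \emph{all} intersecting sets, not just the one you exhibit. Exhibiting explicit intersecting sets (your submultiplicativity estimate) only bounds $\rho$ from below; the ``damping'' step likewise only controls the size of the set you build, not the maximum. To pin down $\rho$ exactly one needs an a priori ceiling on intersecting sets, and nothing in your outline supplies one. The paper sidesteps all of this with a single family: $G=\AGL(1,q^{2})=F{:}F^{\times}$ acting on the cosets of $H=E{:}E^{\times}$ (Construction~\ref{nobo}), with intersecting subgroup $S=F{:}E^{\times}$. Here $|\Ome|=q(q+1)$ is never a perfect power $n^{m}$, so no product decomposition can exist --- no surgery required. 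And $S$ is shown to be a \emph{maximum} intersecting set by transferring the problem to the action of $G$ on $[G:S]$, which has a regular (cyclic, order $q+1$) subgroup and hence the EKR property by Lemma~\ref{regekr}; this gives $\rho(G,\Ome)=|S|/|H|=q$ exactly, and $q=\sqrt{q^{2}}<\sqrt{q(q+1)}$ with ratio tending to $1$, so taking $q$ large handles any $M$ and $\epsilon$ simultaneously, with no density argument needed. I would recommend abandoning the wreath-product route for this theorem and looking for a family in which the maximum intersecting set can be identified exactly, e.g.\ via a sharply transitive subset as in Lemma~\ref{regekr}.
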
        

\begin{theorem}\label{3bou}
There are infinitely many permutation groups $G\leqslant \Sym(\Ome)$ admitting a product decomposition and satisfying $\rho(G,\Ome)>\sqrt{|\Ome|}$.
\end{theorem}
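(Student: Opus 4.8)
The plan is to build the family as wreath products in product action, keeping the base group fixed and letting the top group be a growing regular cyclic group. Fix one of the permutation groups $(H,\Delta)$ from \ref{lisg} with $\rho(H,\Delta)>\sqrt{|\Delta|}$, put $d=|\Delta|$ and $r=\rho(H,\Delta)$, so that $r/\sqrt d>1$, and let $S_H\subseteq H$ be a maximum intersecting set, so that $|S_H|=r\,|H_\delta|$ for any $\delta\in\Delta$. For each integer $n\geq 1$ let $G_n=H\wr\ZZ_n$ act on $\Ome_n=\Delta^n$ in product action, where $\ZZ_n$ permutes the $n$ coordinates regularly. Since $H$ is transitive on $\Delta$ and $\ZZ_n$ is transitive on the coordinates, $G_n$ is transitive on $\Ome_n$; it has degree $d^n$; and it admits a product decomposition by construction. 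As the degrees $d^n$ are pairwise distinct, the $G_n$ are pairwise permutation-inequivalent, so it suffices to prove $\rho(G_n,\Ome_n)>\sqrt{|\Ome_n|}$ for all sufficiently large $n$.

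The main step is to exhibit a large intersecting set in $G_n$, for which I use the standard description of the elements of a wreath product in product action that fix a point: $((h_1,\dots,h_n),k)\in H\wr\ZZ_n$ fixes a point of $\Delta^n$ if and only if, for every cycle $(i_1,\dots,i_\ell)$ of $k$, the product $h_{i_1}h_{i_2}\cdots h_{i_\ell}$ fixes a point of $\Delta$; in particular, for $k=1$ the element $((h_1,\dots,h_n),1)$ fixes a point of $\Delta^n$ precisely when each $h_i$ fixes a point of $\Delta$. Put
\[
S=\bigl\{\,((h_1,\dots,h_n),1)\ :\ h_i\in S_H\ \text{for}\ 1\leq i\leq n\,\bigr\}\subseteq G_n .
\]
For $s=((h_i),1)$ and $t=((h'_i),1)$ in $S$ we have $st^{-1}=((h_i(h'_i)^{-1}),1)$; since $S_H$ is intersecting, each $h_i(h'_i)^{-1}$ fixes a point of $\Delta$, so $st^{-1}$ fixes a point of $\Delta^n$, i.e.\ $s$ and $t$ intersect. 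Thus $S$ is an intersecting set, and $|S|=|S_H|^n$.

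It remains to compare sizes. By transitivity and the orbit--stabilizer theorem, $|(G_n)_\o|=|G_n|/|\Ome_n|=|H|^n n/d^n=|H_\delta|^n\,n$, so
\[
\rho(G_n,\Ome_n)\ \geq\ \frac{|S|}{|(G_n)_\o|}\ =\ \frac{|S_H|^n}{|H_\delta|^n\,n}\ =\ \frac{r^n}{n}\ =\ \frac1n\Bigl(\frac{r}{\sqrt d}\Bigr)^{\! n} d^{\,n/2},
\]
whereas $\sqrt{|\Ome_n|}=d^{\,n/2}$. Since $r/\sqrt d>1$, the quantity $\tfrac1n(r/\sqrt d)^n$ tends to infinity with $n$, hence exceeds $1$ for all sufficiently large $n$; for such $n$ we obtain $\rho(G_n,\Ome_n)>\sqrt{|\Ome_n|}$, giving infinitely many permutation groups with a product decomposition and $\rho>\sqrt{|\Ome|}$.

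The only point genuinely requiring care is the fixed-point criterion in product action (and, with it, the transitivity of $G_n$ on $\Delta^n$), both of which are routine facts about wreath products; that the $G_n$ are pairwise distinct is immediate from their degrees, and that a maximum intersecting set in $G_n$ might be larger than $S$ is irrelevant since only a lower bound on $\rho$ is needed. We note in passing that the same computation yields $\rho(G_n,\Ome_n)\to\infty$, so this family also realizes arbitrarily large values of $\rho$ among groups admitting a product decomposition.
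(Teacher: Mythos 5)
Your argument is correct, and it follows the paper's overall strategy (iterated wreath products built on the base examples of Subsection \ref{lisg}), but the key step is handled differently, in a way that is worth comparing. The paper's proof rests on Lemma~\ref{paty}: it takes an intersecting \emph{subgroup} $S$ of the base group and shows that the full wreath product $S\wr\ZZ_\ell$ (including the top group $\ZZ_\ell$, $\ell$ prime) is again an intersecting subgroup; the nontrivial content there is that elements $(f;m)$ with $m\neq 0$ fix a point, which uses both that $S$ is closed under products and that $m$ generates $\ZZ_\ell$. The payoff is the clean identity $|\mathfrak{S}|/|P_\beta|=\bigl(|S|/|P_\delta|\bigr)^{\ell}$, which beats $\sqrt{|\Ome|}$ for \emph{every} prime $\ell$. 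You instead take only the base-group part $S_H^{\,n}\times\{1\}$, which makes the intersecting property immediate (coordinatewise), works for an arbitrary maximum intersecting set $S_H$ rather than an intersecting subgroup, and imposes no primality condition on $n$ --- but you pay a factor of $n$ against the stabilizer order $|H_\delta|^n\,n$, so the inequality $\rho(G_n,\Ome_n)>\sqrt{|\Ome_n|}$ only comes out for $n$ large enough that $(r/\sqrt{d})^n>n$. Since that still yields infinitely many pairwise inequivalent examples (distinct degrees $d^n$), the theorem follows; your size computations and the fixed-point verification for elements with trivial top component are all correct. In short, you trade the sharper per-$\ell$ bound of the paper for a more elementary argument that avoids Lemma~\ref{paty} entirely; both are valid proofs of the stated theorem, though note that the paper's route also gives the quantitative strengthening $\rho\geq(\,|S|/|P_\delta|\,)^{\ell}$ for every prime $\ell$, which your asymptotic version recovers only in the limit.
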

We now reformulate our Conjecture 1.2 of \cite{LSP} as the following question for future research.
\begin{problem}{\rm
 Are there infinitely many permutation groups $G \leqslant \Sym(\Ome)$ which do not admit a product decomposition and satisfy $\rho(G,\Ome)>\sqrt{|\Ome|}$?
}
\end{problem}

There are results describing the maximum intersecting sets of Frobenius groups (c.f \cite[Theorem 3.6, 3.7]{AM2015}) and of $2$-transitive groups ((c.f \cite{MST2016, MSi2019})). A natural next step is to carry similar investigations for primitive permutation groups. A permutation group is called {\it quasiprimitive} if each of its non-trivial normal subgroup is transitive (All primitive permutation groups are quasiprimitive but the converse is not true.). In this paper, we will see that although many quasiprimitive groups have the EKR property, infinitely many of them do not have the EKR property and  moreover have ``large'' intersecting sets. The O'Nan-Scott-Praeger classification theorem for quasiprimitive groups (c.f \cite{Pra1993}) shows that there are $8$ types of quasiprimitive groups. For our purposes, we divide them into three types: (a) those with subnormal regular subgroups, (b) almost simple, and (c) quasiprimitive groups of product action type. As quasiprimitve groups of the first type have a regular normal subgroup, an application of Lemma 14.6.1 of \cite{GMbook} implies that they satisfy the EKR property. However, Constructions \ref{ASC} and \ref{PAC}, show that $\rho(G, \Ome)$ can be arbitrarily large for quasiprimitive groups of the other two types. We note that both the groups in both these constructions are in fact primitive. So even primitive groups are quite ``far'' from satisfying the EKR property.

\begin{theorem}\label{qp-EKR}
Let $G$ a quasiprimitive permutation group on a set $\Ome$, and let $\rho(G,\Ome)$ be as defined in $\ref{ratio}$. Then either (i) $\rho(G,\Ome)=1$ and $G$ has the EKR property, or (ii) $G$ is an almost simple group or a group of product action type. Moreover for primitive groups in case~{(ii)}, $\rho(G,\Ome)$ can be arbitrarily large. In other words, given $M>0$, there is an almost simple primitive group $A \leqslant \Sym(\Gamma)$ and a primitive group $P \leqslant \Sym(\Delta)$ of product action type, such that $\rho(A, \Gamma)>M$ and $\rho(P, \Delta)>M$.
\end{theorem}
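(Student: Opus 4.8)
The plan is to build the two required families of groups by combining the EKR machinery available for direct products with the unbounded-ratio examples that are promised earlier (Theorem~1.1 via Example~1.2 on $\AGL$-type groups, and Constructions~\ref{ASC} and~\ref{PAC}). First I would establish the dichotomy part: if $G$ is quasiprimitive and has a subnormal regular subgroup, then its O'Nan--Scott--Praeger type forces a regular normal subgroup $N\trianglelefteq G$, and Lemma~14.6.1 of \cite{GMbook} applies directly to give $\rho(G,\Ome)=1$, hence the EKR property. The remaining types in the Praeger classification are precisely the almost simple type and the product action type (the holomorph/twisted-wreath types already having subnormal regular subgroups), so case~(ii) is exactly ``almost simple or product action type.'' This part is essentially a bookkeeping argument over the eight Praeger types, citing the known structural facts.

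For the quantitative claim I would argue the two cases separately. For the almost simple case, I would exhibit an explicit family: take a simple group $T$ acting on a coset space where a ``large'' intersecting set has been constructed in Construction~\ref{ASC}, and note that the construction already makes $\rho(A,\Gamma)$ unbounded for almost simple \emph{primitive} $A$. So here the work is just to check that the construction's groups are genuinely primitive (the stabilizer is maximal) and to read off the growth of the ratio. For the product action case, the natural approach is to start from an almost simple (or even $2$-transitive-but-affine) group $H\leqslant\Sym(\Delta)$ with a reasonably large intersecting set $S_H$ — for instance coming from Example~1.2 — and form $G=H\wr K$ acting on $\Delta^k$ in product action for a suitable transitive $K\leqslant\Sym(k)$. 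The key point is the standard fact that intersecting sets multiply under wreath/product action: if $S\subseteq H$ is intersecting in $H\curvearrowright\Delta$ then $S^k\rtimes(\text{something in }K)$, or more carefully the set of elements of $H\wr K$ whose $K$-component is trivial and whose $H$-components all lie in $S$, is intersecting in $\Delta^k$, while a point stabilizer in the product action has order $|H_\delta|^k\cdot|K_1|$. Taking $k\to\infty$ then forces $\rho(G,\Delta^k)\geqslant (|S_H|/|H_\delta|)^k / |K_1|\to\infty$, and one checks primitivity of the product action in the usual way (the component group is primitive and not regular, $K$ transitive). Construction~\ref{PAC} presumably packages exactly this; I would simply invoke it.

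The main obstacle is the bookkeeping in the wreath-product step: one must verify both (a) that the product-action group $H\wr K$ on $\Delta^k$ is actually \emph{primitive} (so it falls in case (ii) as a primitive group, which requires $H$ primitive-but-not-regular on $\Delta$ and $K$ transitive on the $k$ coordinates — see \cite{Pra1993}), and (b) that the ``product'' set is genuinely intersecting, i.e.\ that if $x=(h_1,\dots,h_k)$ and $y=(h_1',\dots,h_k')$ with each $h_i,h_i'\in S_H$, then $xy^{-1}$ fixes some point of $\Delta^k$; this needs the intersection witnesses in the coordinates to be chosen compatibly, which is why one generally needs $S_H$ to be a union of cosets of $H_\delta$ (as in the $\AGL$ examples) rather than an arbitrary intersecting set, so that a common fixed point across coordinates can be found. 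Once these two verifications are in hand, letting $k$ grow gives $\rho>M$ for any prescribed $M$, completing the proof. I would therefore expect the real content to be concentrated in the lemma (likely proved alongside Construction~\ref{PAC}) that $\rho(H\wr K,\Delta^k)\geqslant \rho(H,\Delta)^{\,|\,\text{fix structure}\,|}$, with the rest being assembly and citation.
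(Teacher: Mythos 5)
Your proposal follows essentially the same route as the paper: the O'Nan--Scott--Praeger trichotomy plus the regular-(subnormal-)subgroup case handled by Lemma~\ref{regekr}, Construction~\ref{ASC} for the almost simple case, and a wreath product in product action built on that same component for the product action case (which is exactly Construction~\ref{PAC}). One correction to your stated ``main obstacle'': in the product action on $\Delta^{k}$ no common fixed point across coordinates is needed --- if $x=(h_i)$, $y=(h_i')$ with all $h_i,h_i'\in S$ intersecting, each ratio $h_ih_i'^{-1}$ fixes some $\delta_i$ chosen independently and $xy^{-1}$ fixes $(\delta_1,\dots,\delta_k)$ --- so no ``union of cosets of $H_\delta$'' hypothesis is required (and indeed the $S$ of Construction~\ref{ASC} is not such a union); the genuinely nontrivial verification, which the paper's Lemma~\ref{paty} supplies via a cycle argument, is that elements of $S\wr\bb{Z}_\ell$ with \emph{nontrivial} top component also fix a point, though your smaller set $S^{k}$ with trivial top component already yields an unbounded ratio. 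Also note that taking the affine Example~\ref{AGL-ex} as the component, as you float, would not produce a primitive group of product action type; the component must be the almost simple primitive group, as in Construction~\ref{PAC}.
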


Two problems now naturally arise, corresponding to the two parts of Theorem~\ref{qp-EKR}.

\begin{problem}\label{problem-1}
{\rm
\begin{enumerate}[{\rm(a)}]
\item Determine which quasiprimitive permutation groups with a subnormal regular subgroup have the strict-EKR property.
\item Determine which quasiprimitive almost simple groups have the EKR property.
\end{enumerate}
}
\end{problem}

The two problems in Problem~\ref{problem-1} seem difficult. For a possible approach, we propose to study a weaker property defined below.
{\rm
\begin{definition}\label{weak-EKR}
{\rm
Let $G$ be a transitive permutation group on a set $\Ome$, and $\o\in\Ome$.
\begin{enumerate}[{\rm(a)}]
\item A subgroup $S$ is called an {\it intersecting subgroup} if it is an intersecting set in $G$;

\item $G$ is said to have the {\it weak-EKR property} if each intersecting subgroup has size at most $|G_\o|$;
\item $G$ is said to have the {\it strict-weak-EKR property} if it has the weak-EKR property and all $|G_{\o}|$ sized intersecting subgroups are conjugate to $G_\o$.
\end{enumerate}
}
\end{definition}
}
Our proof of Theorem~\ref{qp-EKR} is based on constructing groups with large intersecting subgroups. To do so, we will use characterizations of intersecting subgroups given in Lemma \ref{Int-subgp}. In view of Theorem \ref{qp-EKR} and Lemma \ref{Int-subgp}, it is natural to ask the following problem which is a weaker version of Problem \ref{problem-1}.

\begin{problem}\label{AS-EKR}
{\rm
\begin{enumerate}[{\rm(a)}]
\item Classify all almost simple primitive permutation groups $G \leqslant \Sym(\Omega)$ such that $|H| \leqslant |G|/|\Ome|$, for any subgroup $H<G$ with $H \subset \bigcup\limits_{\o \in \Ome}G_{\o}$.
\item Classify all almost simple primitive permutation groups $G \leqslant \Sym(\Omega)$ such that any subgroup $H<G$ with $H \subset \bigcup\limits_{\o \in \Ome}G_{\o}$ is a point stabilizer.
\end{enumerate}
}
\end{problem}
As a starting point, we solve this problem for primitive permutation groups isomorphic to $\PSL(2,p)$. We prove the following theorem in Section~\ref{psl}. Statement~(iii) of the Theorem is the main result of \cite{LPSX2018}.
\begin{theorem}\label{PSLEKR}
Let $p\geqslant 5$ be a prime, $G=\PSL(2,p)$ be a primitive permutation group on $\Ome$ and $\o \in \Ome$. Then the following are true.
\begin{enumerate}[{\rm(i)}]
\item If $p\neq 11,\ 13,\ 23,\ 29,\ 31,\ 59,$ or $61$, then $G$ has the strict-weak-EKR property.
\item If $p\neq 29$ or $31$, then $G$ has the weak-EKR property.
\item If $G_{\o} \cong \bb{Z}_{p}{:}\bb{Z}_{p-1}$, then $G$ has the strict-EKR property.
\item If $p\equiv 3\pmod{4}$ and $G_{\o}\cong \D_{p+1}$, then $G$ has the EKR property.
\item If $p=29$ or $31$, then $G$ has the strict-weak-EKR property unless $G_{\o}\cong \D_{30}$.
\item If $p= 13$ or $61$, then $G$ has the strict-weak-EKR property unless $G_{\o}\cong \D_{p-1}$.
\item If $p= 11$, $23$ or $59$, then $G$ has the strict-weak-EKR property unless $G_{\o}\cong \D_{p+1}$.
\end{enumerate}
\end{theorem}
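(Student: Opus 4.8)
My plan is to reduce Theorem~\ref{PSLEKR} to the subgroup structure of $G=\PSL(2,p)$, supplemented by one spectral argument for statement~(iv), the only one that concerns arbitrary intersecting sets rather than intersecting subgroups. By Dickson's classification, the maximal subgroups of $\PSL(2,p)$ for $p\geqslant 5$ prime are: a Borel subgroup of order $p(p-1)/2$, giving the $2$-transitive action of degree $p+1$ on $\mathrm{PG}(1,p)$; the dihedral groups $\D_{p-1}$ and $\D_{p+1}$, maximal once $p$ is not too small, giving the actions of degree $p(p+1)/2$ and $p(p-1)/2$; and, for primes in suitable residue classes, $\A_4$, $\Sym_4$ and $\A_5$. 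So it suffices to treat each of these finitely many families of primitive actions, handling the small primes $p=5,7$ by direct inspection. For the ``weak'' conclusions I would use Lemma~\ref{Int-subgp}: a subgroup $H\leqslant G$ is an intersecting subgroup of $(G,\Ome)$ precisely when it contains no derangement, equivalently when every element of $H$ is $G$-conjugate into $G_\o$.

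Since every non-identity element of $\PSL(2,p)$ is unipotent (order $p$), split semisimple (order dividing $(p-1)/2$), or nonsplit semisimple (order dividing $(p+1)/2$), and the element orders occurring in each maximal subgroup are known, the derangements of each action can be read off directly: in the $\mathrm{PG}(1,p)$-action they are the nontrivial nonsplit elements; in the $\D_{p-1}$-action, the unipotent elements together with the nonsplit elements of order $>2$; in the $\D_{p+1}$-action, the unipotent elements together with the split elements of order $>2$; and in the $\A_4,\Sym_4,\A_5$ actions, the elements whose order fails to divide $|G_\o|$ (up to a finite check of which classes actually meet $G_\o$). Using Dickson's list of subgroups of $\PSL(2,p)$---cyclic, dihedral, of shape $\bb{Z}_p{:}\bb{Z}_m$ with $m\mid(p-1)/2$, or one of $\A_4,\Sym_4,\A_5$---I would, for each action, locate the largest subgroup disjoint from the relevant derangement set and compare its order with $|G_\o|$. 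Generically the only maximal intersecting subgroups are point stabilisers, giving the strict-weak-EKR property and hence~(i); the only way to match or beat $|G_\o|$ is for $\A_5$, $\Sym_4$ or $\A_4$ to lie inside $\bigcup_{\o}G_\o$ for a $\D_{p-1}$- or $\D_{p+1}$-action, and since $|\A_5|=60$, $|\Sym_4|=24$, $|\A_4|=12$, an elementary congruence analysis (namely, when $15$, resp.\ $12$, resp.\ $3$, divides $(p\pm1)/2$) shows this can occur only for the seven listed primes. Sorting these according to whether the offending subgroup strictly exceeds $|G_\o|$ (so weak-EKR fails: $p=29,31$, giving~(v)) or has order exactly $|G_\o|$ (so weak-EKR survives but strict-weak-EKR fails: $p=13,61$ in~(vi), $p=11,23,59$ in~(vii)) then yields~(i),~(ii),~(v),~(vi),~(vii).

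Statement~(iii) is quoted from \cite{LPSX2018}. For~(iv)---the full EKR property in the $\D_{p+1}$-action when $p\equiv3\pmod4$---the plan is to apply the ratio bound (see \cite{GMbook}) to the derangement graph $\Gamma$ on vertex set $G$, where $x\sim y$ iff $xy^{-1}$ is a derangement. As the set of derangements is a union of conjugacy classes, $\Gamma$ is a normal Cayley graph, so its eigenvalues are $\eta_\chi=\tfrac1{\chi(1)}\sum_g\chi(g)$, the sum over derangements $g$, as $\chi$ runs over $\mathrm{Irr}(G)$. Feeding in the character table of $\PSL(2,p)$ and the derangement classes found above, I would compute all $\eta_\chi$ and show the least one equals $-d/(|\Ome|-1)$, where $d$ is the number of derangements and $|\Ome|=p(p-1)/2$; this is precisely the value for which the ratio bound forces $|S|\leqslant |G|\,(-\tau)/(d-\tau)=|G_\o|$ for every intersecting set $S$, which together with the trivial bound $|S|\geqslant|G_\o|$ gives the EKR property. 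The hypothesis $p\equiv3\pmod4$ enters here: it fixes whether the central involution of $\D_{p+1}$ lies in its cyclic part, hence the exact value of $d$, and it governs the signs of the values of the two characters whose values involve quadratic Gauss sums (those of degree $(p\pm1)/2$)---which is what pins the minimum at the critical value rather than strictly below it.

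The main obstacle is this last eigenvalue estimate: one must show, uniformly in $p$, that no irreducible character of $\PSL(2,p)$ contributes an eigenvalue below $-d/(|\Ome|-1)$, and the small-degree characters carry quadratic-Gauss-sum values whose sign behaviour needs careful handling---presumably this is exactly why $p\equiv1\pmod4$ is excluded from~(iv). A secondary, more mechanical difficulty is pinning down the seven exceptional primes in the subgroup step and verifying in each case (e.g.\ via the ATLAS or a short direct computation in $\PSL(2,p)$) that the candidate subgroup really is an intersecting set for the relevant action and that nothing of strictly larger order is.
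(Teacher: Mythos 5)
For parts (i), (ii), (v), (vi) and (vii) your route is essentially the paper's: reduce to Dickson's list of maximal and arbitrary subgroups of $\PSL(2,p)$, characterize intersecting subgroups via Lemma~\ref{Int-subgp} (the paper sharpens this to Lemma~\ref{pi}: since cyclic subgroups of equal order are conjugate in $\PSL(2,p)$, a subgroup is intersecting iff its set of element orders is contained in that of $G_\o$), and then isolate the primes for which $\A_4$, $\rm{S}_4$ or $\A_5$ sits inside $\bigcup_\o G_\o$ for a dihedral stabilizer and has order at least $|G_\o|$; your congruence conditions and your sorting of the seven exceptional primes into (v), (vi), (vii) agree with the paper's. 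Part (iii) is quoted from \cite{LPSX2018} in both.

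The divergence, and the one genuine gap, is part (iv). Your plan there is the ratio bound on the derangement graph, which requires showing that the least eigenvalue over all irreducible characters of $\PSL(2,p)$ is exactly $-d/(|\Ome|-1)$; you correctly flag that the Gauss-sum behaviour of the degree-$(p\pm1)/2$ characters makes this delicate, but you do not carry it out, so as written (iv) is unproved. The paper avoids all of this with a two-line argument you already have the tools for: when $p\equiv 3\pmod 4$, every element of $R=\bb{Z}_p{:}\bb{Z}_{(p-1)/2}$ has order $p$ or an odd divisor of $(p-1)/2$, hence coprime to $p+1=|\D_{p+1}|$, so $R$ meets no conjugate of $G_\o$ nontrivially and acts semiregularly on $[G:\D_{p+1}]$; since $|R|=p(p-1)/2=|\Ome|$, $R$ is a regular subgroup, and Lemma~\ref{regekr} immediately gives the EKR property. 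This also explains the hypothesis $p\equiv 3\pmod 4$ more transparently than the character-theoretic sign analysis: it is exactly what makes $(p-1)/2$ odd and hence coprime to $p+1$. I recommend replacing your spectral plan for (iv) with this regular-subgroup argument; the rest of your proposal stands.
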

\begin{remark}
{\rm We note that in statements (i),(ii) of the above theorem are true for all primitive representations of $\PSL(2,p)$. On the other hand, in the other statements, there are additional restrictions of the structure of the stabilizer $G_{\o}$.        
}
\end{remark}

We end this paper by proving part (2) of Conjecture 6.6 of \cite{MRS}.
It is easy to see that any subgroup of order $p$ in a transitive permutation group of prime degree is a regular subgroup. Thus by Lemma~\ref{regekr}, any prime degree permutation group satisfies the EKR property. Based on this and some computational evidence, the authors of \cite{MRS} conjectured (Conjecture 6.6 (2) of \cite{MRS}) that
all permutation groups of prime power degree satisfy the EKR property. In Section~\ref{pp}, we show that this conjecture is true.
\begin{theorem}\label{ps}
All transitive permutation groups of prime power degree satisfy the EKR property. 
\end{theorem}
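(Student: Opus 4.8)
The plan is to exhibit, inside $G$, a \emph{sharply transitive subset}: a subset $T\subseteq G$ such that for all $\alpha,\beta\in\Ome$ there is exactly one $t\in T$ with $\alpha^{t}=\beta$. Any such $T$ has $|T|=|\Ome|$, and in the derangement graph of $G$ --- the graph on vertex set $G$ with $x,y$ adjacent when $xy^{-1}$ has no fixed point --- it is a clique: if $x,y\in T$ were adjacent then $xy^{-1}$ would fix some $\alpha$, i.e.\ $\alpha^{x}=\alpha^{y}$, contradicting uniqueness. Since the intersecting sets are exactly the cocliques of this graph and the graph is vertex-transitive, the clique--coclique bound (the same mechanism behind Lemma~\ref{regekr}) shows that every intersecting set has size at most $|G|/|T|=|G|/|\Ome|=|G_{\o}|$; as $G_{\o}$ attains this, $G$ has the EKR property. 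Thus the theorem reduces to showing that every transitive permutation group of prime power degree contains a sharply transitive subset. (This extends the prime-degree case of Lemma~\ref{regekr}, where a subgroup of order $p$ is a regular subgroup.)

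Write $|\Ome|=p^{k}$. The first step reduces everything to $p$-groups: a Sylow $p$-subgroup $P$ of $G$ is already transitive on $\Ome$. Indeed, writing $p^{a}$ for the $p$-part of $|G_{\o}|$, we have $|P|=p^{a+k}$, while $P_{\o}=P\cap G_{\o}$ is a $p$-subgroup of $G_{\o}$ and so $|P_{\o}|\le p^{a}$; hence the orbit $\o^{P}$ has size $|P:P_{\o}|\ge p^{k}=|\Ome|$, forcing $P$ to be transitive. So it suffices to prove that every transitive $p$-group on $p^{k}$ points contains a sharply transitive subset.

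For this I would induct on $k$, the crucial device being a \emph{central} subgroup of order $p$. For $k=0$ take $T=\{1\}$. Let $k\ge1$. Then $P$ is a nontrivial $p$-group, so $Z(P)$ contains an element $z$ of order $p$; put $N=\l z\r$. Centrality makes $N$ normal in $P$; and since $z\ne1$ and $P$ is transitive, $N$ fixes no point, so every $N$-orbit has length $p$ and $N$ is semiregular. The $N$-orbits form a block system $\bar\Ome$ with $|\bar\Ome|=p^{k-1}$, on which $P$ acts as a transitive $p$-group $\bar P=P/K$, where $K\supseteq N$ is the kernel of the action on blocks. By the inductive hypothesis $\bar P$ has a sharply transitive subset $\bar T$ of size $p^{k-1}$. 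Choose a preimage $t\in P$ of each element of $\bar T$, let $T_{1}$ be the resulting set of $p^{k-1}$ elements, and put $T=\{\,z^{j}t:\ 0\le j<p,\ t\in T_{1}\,\}$, a set of $p^{k}$ distinct elements. To check sharp transitivity, fix $\alpha,\beta\in\Ome$: since $z$ acts trivially on $\bar\Ome$ while $\bar T$ is sharply transitive there, the block containing $\beta$ determines a unique $t\in T_{1}$ for which $\alpha^{t}$ lies in that block; and since the block is an $N$-orbit on which $\l z\r$ acts regularly, exactly one $j$ then satisfies $\alpha^{z^{j}t}=(\alpha^{t})^{z^{j}}=\beta$ (using that $z$ is central). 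Hence $T$ is sharply transitive, which completes the induction and the proof.

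The only substantial point is spotting the right inductive statement together with the right normal subgroup: working with sharp transitivity rather than regular subgroups, and taking $N$ \emph{inside the centre}, delivers everything at once --- centrality gives normality (hence the block system), $z\ne1$ with transitivity gives semiregularity, and centrality again gives the commutation $z^{j}t=tz^{j}$ needed to graft the $j$-coordinate onto the lift $T_{1}$. The remaining ingredients --- the Sylow index count and the clique--coclique bound --- are standard, so the argument needs neither the classification of finite simple groups nor character-theoretic eigenvalue estimates.
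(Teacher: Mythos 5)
Your proof is correct and follows essentially the same route as the paper: reduce to a Sylow $p$-subgroup via the orbit/index count, then induct on the degree by passing to the block system formed by the orbits of a central semiregular subgroup and grafting that subgroup onto a lift of a sharply transitive set from the quotient, finishing with Lemma~\ref{regekr}. The only cosmetic difference is that the paper uses the full centre $Z(P)$ (so the degree may drop by more than one factor of $p$ per step) where you use a single central element of order $p$; both work identically.
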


\section{Proof of Theorem~\ref{no-bound} and Theorem~\ref{sqo}.}\label{prel}
In this section we construct permutation groups with large intersecting sets, and thereby prove Theorem~\ref{no-bound}. We start by making the following elementary observation.

\begin{lemma}\label{Int-subgp}
Let $G$ be a transitive permutation group on $\Ome$, and fix a point $\o\in\Ome$.
Then, for each subgroup $S<G$, the following statements are equivalent:
\begin{enumerate}[{\rm(a)}]
\item $S$ is an intersecting subgroup,
\item every element of $S$ fixes some point of $\Ome$,
\item each element of $S$ is conjugate to an element of $G_\o$.
\end{enumerate}
\end{lemma}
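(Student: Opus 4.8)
The plan is to establish the cyclic chain of implications $(a)\Rightarrow(b)\Rightarrow(c)\Rightarrow(a)$. Each implication is short: all three follow by unwinding the definition of ``\emph{intersect}'' together with the transitivity of $G$, and the only place where the subgroup hypothesis on $S$ is genuinely used is $(a)\Rightarrow(b)$ (and, trivially, in forming $xy^{-1}\in S$ for $(c)\Rightarrow(a)$).

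First I would record the basic reformulation: two elements $x,y\in G$ intersect if and only if $xy^{-1}$ fixes a point of $\Ome$, since $\a^{x}=\a^{y}$ is equivalent to $\a^{xy^{-1}}=\a$. Granting this, $(a)\Rightarrow(b)$ is immediate: if $S$ is an intersecting subgroup and $s\in S$, then $s$ and $1$ both lie in $S$, so they intersect, whence $s=s\cdot 1^{-1}$ fixes some point of $\Ome$. It is worth remarking here that for a subgroup one has $\{xy^{-1}:x,y\in S\}=S$, so ``$S$ is intersecting'' is \emph{exactly} the condition in (b); for an arbitrary subset only the weaker statement that every $xy^{-1}$ fixes a point would follow.

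For $(b)\Rightarrow(c)$ I would invoke transitivity: if $s\in S$ and $\a^{s}=\a$ for some $\a\in\Ome$, choose $h\in G$ with $\o^{h}=\a$; then $G_{\a}=G_{\o^{h}}=h^{-1}G_{\o}h$, so $s\in G_{\a}$ gives $hsh^{-1}\in G_{\o}$, i.e.\ $s$ is conjugate to an element of $G_{\o}$. For $(c)\Rightarrow(a)$, take any $x,y\in S$; then $xy^{-1}\in S$, so by (c) we may write $xy^{-1}=g^{-1}tg$ with $g\in G$ and $t\in G_{\o}$, and a one-line check (using $\o^{t}=\o$, so that $(\o^{g})^{g^{-1}tg}=\o^{tg}=\o^{g}$) shows that $xy^{-1}$ fixes $\o^{g}$; hence $x$ and $y$ intersect. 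As $x,y$ were arbitrary, $S$ is an intersecting subgroup.

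I do not expect a genuine obstacle here: the lemma is, as stated, an elementary observation. The only things to be careful about are the convention for how conjugation interacts with the action — making sure the identity $G_{\o^{h}}=h^{-1}G_{\o}h$ is written on the correct side — and not forgetting to use the subgroup hypothesis, since it is precisely what upgrades the trivial one-sided statement into the equivalence of (a) with (b).
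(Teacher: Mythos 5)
Your proposal is correct and follows essentially the same route as the paper: the cyclic chain $(a)\Rightarrow(b)\Rightarrow(c)\Rightarrow(a)$, using the identity element for the first implication, transitivity and conjugacy of stabilizers for the second, and closure of the subgroup under taking ratios $xy^{-1}$ for the third. Your write-up is in fact slightly more explicit than the paper's at the step $(c)\Rightarrow(a)$, where the paper only verifies that a conjugate of an element of $G_\o$ fixes a point and leaves the application to $xy^{-1}\in S$ implicit.
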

\begin{proof}
First we show that (a) implies (b). As $S$ is an intersecting set, given $h\in S$, the ratio $he^{-1}=h$ must fix some point of $\Ome$.

Let $h\in G$ fix a point $\alpha \in \Ome$, that is $h \in G_{\alpha}$. As $G$ acts transitively, the stabilizers $G_{\alpha}$ and $G_{\o}$ are conjugate, and thus $h$ is conjugate to some element of $G_{\o}$. This observation shows that (b) implies (c).

Now let $h,x\in G$ be such that $x^{-1}hx\in G_{\o}$. It follows that $h$ fixes $\o^{x}$. This observation shows that (c) implies (a).    
\end{proof}
 
It would be worth to point out that group theoretic properties (solubility, nilpotence etc) of the stabilizer subgroup may not hold for intersecting subgroups. We give the following examples to support this statement.

\begin{example}
{\rm
\begin{enumerate}

\item
Assume $p$ is a prime with $p\equiv 1$ $(\mod 30)$, say $p=31$.
Consider $G=\PSL(2,p)$ and a subgroup $H$ which is isomorphic to the dihedral group $\D_{p-1}$.
In this case, $G$ has a subgroup $S$ which isomorphic to the alternating group $\A_5$.
It is known that all subgroups of $G$ of order 2, 3 or 5 are conjugate.
As $|H|$ is divisible by 30, each element of $S=\A_5$ is conjugate to an element of $H$.
Thus by Lemma \ref{Int-subgp}, we see that $S$ is an intersecting subgroup of $G$, with respect to its action on $[G:H]$.
Note that {\it $H\cong \D_{p-1}$ is solvable, and $S\cong \A_5$ is non-solvable.}

\item Let $G=\PSL(2,2^f)$ where $f=2r$ with $r\geqslant2$.
Consider a subgroup $H$ isomorphic to $\PSL(2,2^2)$.
It is known that all involutions of $G$ are conjugate.
It follows that a Sylow 2-subgroup $S\cong\ZZ_2^f$ is an intersecting set of $G$, with respect to its action on $[G:H]$.
Note that {\it $H\cong \PSL(2,2^2)\cong\A_5$ is non-solvable, and $S$ is an elementary abelian group.}

\end{enumerate}}
\end{example}

We now apply Lemma \ref{Int-subgp} to construct large intersecting sets and thus prove Theorem \ref{no-bound}.
\begin{example}
\label{AGL-ex}
{\rm
Let $p$ be an odd prime, $V=\ZZ_p^{d}$, and $G=\AGL(d,p)=V{:}\GL(d,p)$ be the affine group.
Pick an involution $x$ of the centre $\Z(\GL(d,p))$, and
another involution $y$ in $(V{:}\l x\r)\setminus\l x\r$.
Then $G_\o:=\l x,y\r\cong \D_{2p}$. We now consider the action of $G$ on $[G:G_\o]$.

Since $\GL(d,p)$ acts transitively on the set of non-zero vectors, we see that every element of $S=V{:}\l x\r$ is conjugate to some element of $G_{\o}$. Thus
by Lemma~\ref{Int-subgp}, we see that $S$ is an intersecting set of $G$. We note that
\[|S|=2p^d,\ \text{and} |G_\o|=2p.\]
So $|S|$ can be arbitrarily larger than $|G_\o|$, dependent on $d$.}
\end{example}

We now recall a result from \cite{AM2015} that will be used in proofs of our main results. A subset $C$ of a permutation group $G \leqslant \Sym(\Ome)$ is said to be {\it sharply transitive} if for any $(\alpha,\beta) \in \Ome \times \Ome$ there is exactly one $c \in C$ such that $\alpha^{c}=\beta$. We note that a regular subgroup is a sharply transitive set. The following result which is equivalent to Corollary 2.2 of \cite{AM2015} shows that permutation groups containing sharply transitive sets satisfy the EKR property. The authors of \cite{AM2015} prove it using some Linear algebra and graph theory. In Section~\ref{FR}, we present an alternate proof for the interested reader. In Section~\ref{pp}, we use this result to prove Theorem~\ref{ps}.  

\begin{lemma}\label{regekr}
Let $G$ be a finite permutation group on $\Omega$ and $\o \in \Omega$. If $G$ contains a sharply transitive subset $C$, then the following are true:
\begin{enumerate}[{\rm(a)}]
\item If $S \subset G$ is an intersecting set, then there exists a set $\{H_{c}|\ c \in K\}$ of pairwise disjoint subsets of $H$ (that is $H_{c}\subset H$ and $H_{c}\cap H_{d} =\emptyset$ for all $c, d \in K$ with $c\neq d$) such that $S=\bigcup\limits_{c \in K} H_{c}c$; and
\item $G$ has the EKR property
\end{enumerate}
\end{lemma}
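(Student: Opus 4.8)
The plan is to prove part~(b) as an immediate consequence of part~(a), since if $S=\bigcup_{c\in K}H_c c$ with the $H_c$ pairwise disjoint subsets of a point stabilizer $H=G_\o$, then $|S|=\sum_{c\in K}|H_c|\leqslant |H|$ because disjoint subsets of $H$ have total size at most $|H|$; thus $\rho(G,\Ome)=1$ and $G$ has the EKR property. So the real content is part~(a), which describes the fine structure of an arbitrary intersecting set in the presence of a sharply transitive subset $C$.

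For part~(a), first I would fix a point stabilizer $H=G_\o$ and use the sharply transitive set $C$ to get a ``coordinate system'' on $G$: since $C$ acts sharply transitively on $\Ome$, the map sending $c\in C$ to $\o^c$ is a bijection onto $\Ome$, and more usefully, every $g\in G$ can be written uniquely as $g=hc$ with $h\in H$ and $c\in C$ --- indeed, $\o^g=\o^c$ for a unique $c\in C$, and then $gc^{-1}$ fixes $\o$, so lies in $H$. This gives a bijection $G\leftrightarrow H\times C$. Writing $S$ in these coordinates, for each $c\in C$ set $H_c=\{h\in H : hc\in S\}$, so that tautologically $S=\bigcup_{c\in C}H_c c$ (a disjoint union as sets of group elements, with $K=\{c\in C: H_c\neq\emptyset\}$). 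What remains is to show the $H_c$ are pairwise \emph{disjoint as subsets of $H$}: that is, if $h\in H_c\cap H_d$ with $c\neq d$, we must derive a contradiction with $S$ being intersecting.

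The key step is this disjointness argument, and I expect it to be the main obstacle. Suppose $h\in H_c\cap H_d$, so both $hc$ and $hd\in S$. Since $S$ is intersecting, $(hc)(hd)^{-1}=h(cd^{-1})h^{-1}$ fixes some point of $\Ome$, equivalently $cd^{-1}$ fixes some point of $\Ome$. But $c,d\in C$ are distinct elements of a sharply transitive set, so $cd^{-1}$ is a non-identity element that --- I claim --- is \emph{fixed-point-free}: if $\alpha^{cd^{-1}}=\alpha$ then $\alpha^c=\alpha^d$, and sharp transitivity (uniqueness of the element of $C$ carrying $\o$-or-rather any fixed source point to $\alpha^c$... one must be slightly careful here, since sharp transitivity is about pairs) forces $c=d$, a contradiction. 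The careful version: sharp transitivity says for the pair $(\beta,\alpha^c)$ where $\beta=\alpha^{c^{-1}}\cdot$(appropriate source) there is a unique element of $C$; applying this with the source point $\o^{(cd^{-1}c^{-1})}$ or, more cleanly, observing that $c$ and $d$ both send $\alpha^{d^{-1}}\!\cdot$... --- I would phrase it as: $\alpha^{cd^{-1}}=\alpha$ means $c$ and $d$ agree on the point $\alpha^{d^{-1}}$, i.e. both send $\alpha^{d^{-1}}$ to $\alpha$, so by sharp transitivity applied to the pair $(\alpha^{d^{-1}},\alpha)$ we get $c=d$. Hence $cd^{-1}$ is fixed-point-free, so $h(cd^{-1})h^{-1}$ is fixed-point-free, contradicting that it fixes a point. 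Therefore $H_c\cap H_d=\emptyset$ for $c\neq d$, completing part~(a), and part~(b) follows as above.
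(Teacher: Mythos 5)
Your proposal is correct and follows essentially the same route as the paper: decompose $G$ into the cosets $G_\o c$ for $c\in C$, set $H_c=(S\cap G_\o c)c^{-1}$, and use the fact that $cd^{-1}$ is fixed-point-free for distinct $c,d\in C$ (together with $(hc)(hd)^{-1}$ being conjugate to $cd^{-1}$) to get pairwise disjointness, whence $|S|=\sum_c|H_c|\leqslant|G_\o|$. The only difference is cosmetic: your computation $(hc)(hd)^{-1}=h(cd^{-1})h^{-1}$ is actually cleaner than the paper's.
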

\begin{proof}
A proof is given in Section \ref{pre}.
\end{proof}

The following construction which is motivated by the Example \ref{AGL-ex} will be used to prove Theorem \ref{no-bound}. 

\begin{construction}\label{nobo}{\rm
Let $p$ be a prime, $q=p^{d}$. Let $E=(\bb{F}_{q},+)$ and $F=(\bb{F}_{q^{2}},+)$ be  additive groups of the finite fields of orders $p^{d}$ and $p^{2d}$ respectively. Let $E^{\times}$ and $F^{\times}$ denote the multiplicative groups of the finite fields of orders $p^{d}$ and $p^{2d}$ respectively. We consider the action of the affine group $G=\AGL(1,p^{2d})=F:F^{\times}$ on the coset space $\Omega=[G:H]$ of the subgroup $H=E{:}E^{\times}$. This action makes $G$ a transitive permutation group on $\Omega$. Pick the subgroup $S=F:E^{\times}$.}
\end{construction} 

\begin{lemma}
Let $G$, $H$, $\Omega$, and $S$ be as defined in Construction~\ref{nobo}. Then the following are true.
\begin{enumerate}[{\rm(a)}]
\item $S$ is a maximum intersecting set,
\item $|S| < |\sqrt{\Ome}||H|$, and
\item $\lim\limits_{q\to \infty}\dfrac{|S|}{\sqrt{|\Omega|}|H|}=1$.
\end{enumerate}
\end{lemma}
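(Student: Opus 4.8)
The plan is to verify the three claims by direct computation with the groups $G=\AGL(1,q^2)=F{:}F^\times$, $H=E{:}E^\times$, and $S=F{:}E^\times$ of Construction~\ref{nobo}, where $q=p^d$, $E=(\FF_q,+)$ and $F=(\FF_{q^2},+)$. First I would record the basic numerology: $|G|=q^2(q^2-1)$, $|H|=q(q-1)$, $|S|=q^2(q-1)$, so that $|\Ome|=[G:H]=q(q+1)$ and $|S|/|H|=q$. From these, claim~(b) is immediate once we know $S$ is intersecting, since $q<\sqrt{q(q+1)}=\sqrt{|\Ome|}$; and claim~(c) follows at once from $|S|/(\sqrt{|\Ome|}\,|H|)=q/\sqrt{q(q+1)}=\sqrt{q/(q+1)}\to 1$ as $q\to\infty$.

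For claim~(a), I would split into two parts: $S$ is intersecting, and $S$ is maximum. To see $S$ is an intersecting subgroup, by Lemma~\ref{Int-subgp} it suffices to show every element of $S=F{:}E^\times$ is conjugate in $G$ to an element of $H=E{:}E^\times$ (this is the condition that every element of $S$ lies in some conjugate of $H$, i.e. fixes some point of $\Ome$). An element of $S$ is a pair $(v,\lambda)$ with $v\in F=\FF_{q^2}$ and $\lambda\in E^\times=\FF_q^\times$. If $\lambda\neq 1$, then the affine map $x\mapsto \lambda x+v$ has a (unique) fixed point $x_0\in \FF_{q^2}$, and conjugating by the translation $t_{x_0}\in F\leqslant G$ sends $(v,\lambda)$ to $(0,\lambda)\in E^\times\leqslant H$; if $\lambda=1$, then $(v,1)$ is a translation, and since $E^\times=\FF_q^\times$ acts (by conjugation, via the $F^\times$-action) transitively on the nonzero elements of $E=\FF_q$, while $F^\times$ acts transitively on $F\setminus\{0\}$, every translation of $G$ is conjugate to a translation lying in $E\leqslant H$. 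Hence every element of $S$ is conjugate into $H$, so $S$ is intersecting.

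The main obstacle is proving that $S$ is \emph{maximum}, i.e. that no intersecting set in $G$ has size exceeding $|S|=q^2(q-1)$. Here I would invoke Lemma~\ref{regekr}: the group $G=\AGL(1,q^2)$, in its natural action on $\FF_{q^2}$, contains a sharply transitive subset — indeed the translation subgroup $F$ is regular — but we need a sharply transitive set for the action on $\Ome=[G:H]$, which requires more care; the cleaner route is to bound the intersecting sets of $G$ acting on $[G:H]$ via the general theory. Concretely, I would use the ratio bound (Hoffman-type bound) for the derangement graph $\Ga_{G,\Ome}$: an intersecting set has size at most $|\Ome|^{-1}|G|\cdot(\text{fractional chromatic-type factor})$, but more to the point, I expect the intended argument is that $S$ is exactly a union of cosets of $H$ forming a clique in the appropriate quotient, and that a counting argument identifies $|S|=|G_\o|\cdot\rho$ with $\rho=q$; one then shows any larger intersecting set would force a derangement inside $S\cdot S^{-1}$, contradicting the conjugacy computation above. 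I would therefore structure the maximality proof as: (1) show $S S^{-1}=F{:}E^\times$ again contains no derangement of the $[G:H]$-action (same fixed-point computation as above, applied to products), confirming $S$ is intersecting; (2) apply the clique–coclique bound in $\Ga_{G,\Ome}$, using that the stabilizer coset $H$ together with a suitable sharply transitive set meets the bound, to conclude $|S|\le |\Ome\,|^{-1}|G|\cdot\alpha$ and check equality. The delicate point — and where I would expect to spend the most effort — is producing the right sharply transitive set (or the right eigenvalue estimate for the derangement graph of $(G,\Ome)$) to make the clique–coclique bound tight at $|S|=q^2(q-1)$; once that is in place, claims (b) and (c) are the elementary estimates above.
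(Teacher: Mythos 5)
Your computations for parts (b) and (c), and your proof that $S$ is an intersecting subgroup (every element of $S=F{:}E^{\times}$ is $G$-conjugate into $H=E{:}E^{\times}$), are correct and essentially identical to the paper's. The genuine gap is in the maximality half of part (a), and you effectively concede it: you never produce the object that yields the bound $|X|\leqslant q^{2}(q-1)$ for an arbitrary intersecting set $X$. Your first instinct, a sharply transitive set for the action of $G$ on $\Ome=[G:H]$, cannot succeed: by Lemma~\ref{regekr} such a set would force every intersecting set to have size at most $|H|=q(q-1)$, contradicting the fact that $S$ itself is intersecting of size $q^{2}(q-1)$. Your fallback (ratio bound, clique--coclique bound, ``the right eigenvalue estimate'') is left entirely unexecuted; in particular, to make clique--coclique give $|X|\leqslant |G|/(q+1)$ you would need to exhibit a clique of size $q+1$ in the derangement graph of $(G,\Ome)$, and you do not.

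The idea you are missing is the paper's transfer between two actions. Since $H\leqslant S$ and every element of $S$ is conjugate into $H$, an element of $G$ fixes a point of $[G:H]$ if and only if it fixes a point of $[G:S]$; hence a subset of $G$ is intersecting for the action on $\Ome=[G:H]$ if and only if it is intersecting for the action on $[G:S]$. The latter action has degree only $|G|/|S|=q+1$, and there a sharply transitive set is easy to come by: the paper uses the cyclic subgroup of $F^{\times}$ of order $q+1$, and in any case $S$ contains $F$ and $G/F\cong F^{\times}$ is abelian, so $S$ is normal in $G$ and any transversal $T$ of $S$ in $G$ is sharply transitive for $[G:S]$; for distinct $t_{1},t_{2}\in T$ the ratio $t_{1}t_{2}^{-1}$ lies outside $S=\bigcup_{g}H^{g}$ and so is a derangement on $\Ome$, which is precisely the size-$(q+1)$ clique your clique--coclique plan lacked. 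Applying Lemma~\ref{regekr} to $(G,[G:S])$ gives $|X|\leqslant |S|$ for every intersecting set $X$, completing part (a). Without this change of action (or the equivalent explicit clique), your outline does not close.
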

\begin{proof}

Let $x \in F^{\times}$ and $y \in E^{\times}$, and consider the element $(x,y)\in S$. We have $(0,x^{-1})(x,y)(0,x)=(1,y) \in E{:}E^{\times}=H.$ Thus every element of $S$ is conjugate to some element of $H$. So by Lemma~\ref{Int-subgp}, we see that $S$ is an intersecting subgroup. Since $H\leqslant S$, every element of $H$ is conjugate to an element of $S$. Thus $g \in G$ is conjugate to an element $H$ if and only if it is conjugate to an element of $S$. Now $g\in G$ fixes a point in the coset space $[G:S]$  if and only if $g$ is conjugate to an element of $S$. Therefore a subset $X$ is an intersecting set with respect to the action of $G$ on $\Omega=[G:H]$ if and only if it is an intersecting set with respect the action of $G$ on $[G:S]$.

Now the $q+1$ element cyclic subgroup $K$ of $F^{\times}$ is a regular subgroup with respect to the action of $G$ on $[G:S]$. By Lemma~\ref{regekr}, every permutation group with a regular subgroup (which is a sharply transitive set) must satisfy the EKR property. Therefore $S$ is a maximum intersecting set for the action of $G$ on $[G:S]$ and hence also for the action of $G$ on $\Omega$.

The size of $\Ome$ is equal to\[|\Ome|=\frac{|G|}{|H|}=\frac{q^{2}(q^{2}-1)}{q(q-1)}=q(q+1).\]
On the other hand we have, \[\frac{|S|}{|H|}=\frac{q^{2}(q-1)}{q(q-1)}=q= \sqrt{q^{2}}<\sqrt{q(q+1)}=\sqrt{|\Omega|}.\] We finish the proof by observing that $\lim\limits_{q\to \infty}\dfrac{|S|}{\sqrt{|\Omega|}|H|}=1$. 
\end{proof}

To conclude the proof of Theorem~\ref{sqo}, we need to show that $G$ does not admit a product decomposition. The degree of $G$ as a permutation group is $q(q+1)$, which is not of the form $n^{m}$, and thus $G$ does not admit a product decomposition.
 
{\bf Proof of Theorem~\ref{sqo}} is now complete by the above Lemma.
\section{Proof of Theorem~\ref{3bou}}
We start this section by recalling a construction of permutation groups using the wreath product. Let $\ell$ be a prime. Consider the group $\bb{Z}_{\ell}$ and a set $X$. By $X^{\bb{Z}_{\ell}}$, we denote the set of $X$-valued functions on $\bb{Z}_{\ell}$. The group $\bb{Z}_{\ell}$ has a natural action on $X^{\bb{Z}_{\ell}}$ given by $(m\cdot \pi)(j)=\pi(j-m)$, for all $m \in \bb{Z}_{\ell}$, $\pi \in X^{\bb{Z}_{\ell}}$ and $j \in \bb{Z}_{\ell}$.

Consider a permutation group $G \leqslant \Sym(\Ome)$. Then the set $G^{\bb{Z}_{\ell}}$ can be considered as a group acting on $\Ome^{\bb{Z}_{\ell}}$ via the formula $(f\pi)(j)=f(j)\pi(j)$ for all $f \in G^{\bb{Z}_{\ell}} $, $\pi \in X^{\bb{Z}_{\ell}}$ and $j \in \bb{Z}_{\ell}$. We note that $\bb{Z}_{\ell}$ normalizes $G^{\bb{Z}_{\ell}}$ in $\mathrm{Aut}(X^{\bb{Z}_{\ell}})$, with $mfm^{-1}=m\cdot f$, and so we have $G^{\bb{Z}_{\ell}}\ : \bb{Z}_{\ell} \leqslant \Sym(\Ome^{\Gamma})$. The element $(f;m)\in G^{\bb{Z}_{\ell}}\ : \bb{Z}_{\ell}$ acts on $\pi \in \Ome^{\Gamma}$ via $((f;m)\pi)(i)=f(i-m)\pi(i-m)$. We use $G {\wr} \bb{Z}_{\ell}\leqslant \Sym(\Ome^{\Gamma})$ to denote this permutation group. The following Lemma will help us understand the relationship between $\rho(G, \Ome)$ and $\rho(G\ {\wr} \bb{Z}_{\ell}, \Ome^{\bb{Z}_{\ell}})$, in the cases where we have a subgroup which is also a maximum intersecting set.

\begin{lemma}\label{paty}
If $S$ is an intersecting subgroup of $G \leqslant \Sym(\Ome)$, then $\mathfrak{S}=S\ {\wr}\bb{Z}_{\ell}$ is an intersecting subgroup of $G {\wr} \bb{Z}_{\ell} \leqslant \Sym(\Ome^{\bb{Z}_{\ell}})$.
\end{lemma}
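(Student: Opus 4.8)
The plan is to verify directly, using the characterization of intersecting subgroups in Lemma~\ref{Int-subgp}, that every element of $\mf{S}=S\wr\bb{Z}_\ell$ fixes a point of $\Ome^{\bb{Z}_\ell}$. A general element of $\mf{S}$ has the form $(f;m)$ with $f\in S^{\bb{Z}_\ell}$ (so each coordinate $f(i)$ lies in $S$) and $m\in\bb{Z}_\ell$. I would split into two cases according to whether $m=0$ or $m\neq 0$.

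First, the easy case $m=0$. Here $(f;0)$ acts coordinatewise: $((f;0)\pi)(i)=f(i)\pi(i)$. Since $S$ is an intersecting subgroup of $G$, Lemma~\ref{Int-subgp}(b) gives, for each $i\in\bb{Z}_\ell$, a point $\alpha_i\in\Ome$ with $\alpha_i^{f(i)}=\alpha_i$. Defining $\pi\in\Ome^{\bb{Z}_\ell}$ by $\pi(i)=\alpha_i$ produces a fixed point of $(f;0)$, so $(f;0)$ fixes a point of $\Ome^{\bb{Z}_\ell}$.

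The main case is $m\neq 0$. Since $\ell$ is prime, $m$ generates $\bb{Z}_\ell$, so the cyclic shift $i\mapsto i-m$ acts as a single $\ell$-cycle on the coordinate set $\bb{Z}_\ell$. Writing the action as $((f;m)\pi)(i)=f(i-m)\pi(i-m)$, the condition $(f;m)\pi=\pi$ becomes $\pi(i)=f(i-m)\pi(i-m)$ for all $i$. Iterating around the $\ell$-cycle, this reduces to a single constraint: picking a base coordinate $0$, the values $\pi(0),\pi(-m),\pi(-2m),\dots$ are all determined by $\pi(0)$ together with the $f$-values, and consistency around the cycle requires $\pi(0)$ to be fixed by the product $g:=f(-\ell m)\cdots f(-2m)f(-m)$ taken in the appropriate order (indices mod $\ell$). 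Here the key point is that $g$ is a product of elements of $S$, hence $g\in S$; since $S$ is an intersecting \emph{subgroup} (not merely a set), Lemma~\ref{Int-subgp}(b) applies to $g$ and yields a point $\alpha\in\Ome$ with $\alpha^g=\alpha$. Setting $\pi(0)=\alpha$ and propagating the values around the cycle via $\pi(i-m)=f(i-m)^{-1}\cdots$, or more directly defining $\pi$ at the coordinates $-jm$ by partial products of the $f$-values applied to $\alpha$, gives a well-defined $\pi\in\Ome^{\bb{Z}_\ell}$ fixed by $(f;m)$.

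Thus every element of $\mf{S}$ fixes a point, so by Lemma~\ref{Int-subgp} the subgroup $\mf{S}=S\wr\bb{Z}_\ell$ is an intersecting subgroup of $G\wr\bb{Z}_\ell$. The only point requiring care---and the place I expect to have to be most careful with indices---is the cycle-consistency computation in the case $m\neq 0$: one must track the order of multiplication of the $f(i-jm)$ correctly, and use that $S$ is closed under products. Everything else is bookkeeping.
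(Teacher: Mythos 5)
Your proposal is correct and follows essentially the same route as the paper: reduce to the cycle-consistency condition, observe that the relevant product of $f$-values lies in $S$ because $S$ is a subgroup, apply Lemma~\ref{Int-subgp} to get a fixed point, and propagate it around the $\ell$-cycle. The only difference is cosmetic --- you treat the $m=0$ case explicitly (the paper leaves it implicit) and defer the exact ordering of the product to bookkeeping, which the paper writes out as the partial products $x_{im}=\bigl(\prod_{j=i+1}^{\ell-1}f(jm)\bigr)x_{-m}$.
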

\begin{proof}
Consider $f \in S^{\bb{Z}^\ell}$ and $m\neq 0 \in \bb{Z}_{\ell}$. We now show that $(f;m)\in S{\wr}\bb{Z}_{\ell}$ fixes a point of $\Ome^{\bb{Z}_{\ell}}$. Since every element of $S$ fixes a point in $\Ome$ (by Lemma~\ref{Int-subgp}), we may choose a point $x_{-m}\in \Ome$ such that it is fixed by $\left(\prod \limits_{j=0}^{\ell-1}f({(\ell-j-2)m})\right)$. For $0 \leqslant i \leqslant \ell-2$, we set \[x_{im}:=\left(\prod\limits_{j=i+1}^{\ell-1}f(jm)\right) x_{-m}.\] We thus have $f((i-1)m)x_{(i-1)m}=x_{im}$. Note the $m$ generates $\bb{Z}_{\ell}$ and thus \[\{im|\ -1\leqslant i \leqslant \ell-2\}=\bb{Z}_{\ell}.\] Let $\pi \in \Ome^{\bb{Z}_{\ell}}$ be such that $\pi(im)=x_{im}$ for all $-1\leq i \leq \ell-2$, then $(f;m)\pi(im)=f((i-1)m)\pi((i-1)m)=\pi(im)$, and therefore $(f;m)$ fixes $\pi$. Thus by Lemma \ref{Int-subgp}, $\mathfrak{S}$ is an intersecting subgroup.
\end{proof}
 
We are now ready to prove Theorem~\ref{3bou}. Let $P\leqslant \Sym(\Delta)$ be a permutation group with a maximum intersecting set $S$, which is also a subgroup. Then by Lemma~\ref{paty}, $\mathfrak{S}=S {\wr} \bb{Z}_{\ell} \leqslant \Sym(\Ome^{\bb{Z}_{\ell}})$ is an intersecting subgroup of $P \wr \bb{Z}_{\ell}\leqslant \Sym(\Ome^{\bb{Z}_{\ell}})$ (here $\ell$ is a prime). Given $\delta \in \Delta$, the group $P_{\delta}^{\bb{Z}_{\ell}} {:} \bb{Z}_{\ell}$ is a point stabilizer in the permutation group $P \wr \bb{Z}_{\ell}\leqslant \Sym(\Ome^{\bb{Z}_{\ell}})$. Thus we have \[\rho(P \wr \bb{Z}_{\ell}, \Ome^{\bb{Z}_{\ell}})\geq \dfrac{|\mathfrak{S}|}{|P_{\delta}^{\bb{Z}_{\ell}} {:} \bb{Z}_{\ell}|}= \left(\dfrac{|S|}{|P_{\delta}|}\right)^{\ell}.\] So if we find a permutation group $P \leqslant \Sym(\Delta)$ which has an intersecting subgroup $S <P$ satisfying $\dfrac{|S|}{|P_{\delta}|}> \sqrt{|\Delta|}$, then we would have proved Theorem~\ref{3bou}. Theorem 5.1 of \cite{MRS} provides three such examples. The authors found these using magma. In Table~\ref{larget}, we provide examples of a groups $G$ acting on the coset space $\Ome$ of a subgroup $H$. In all these examples, there is an intersecting subgroup $K$ with $|K|/|H|>\sqrt{|\Ome|}$. Therefore for all the groups in the table, we have\[\rho(G,\Ome) \geq |K|/|H| > \sqrt{|\Ome|}\]
The groups in rows 2, 3, and 6, were found by the authors of \cite{MRS}. 
In the following subsection, we will explain these examples in detail.
\subsection{Groups with large intersecting subgroups.}\label{lisg}
\footnotesize
\begin{table}[H]
\begin{tabular}{|c |c |c |c |c |}
\hline
$G$ & $H$ & $|\Ome|=|G|/|H|$ & $K$ & $|K|/|H|$ \\
\hline
$\bb{F}_{5}^{2}\ :\SL(2,3)$ & $\bb{F}_{5}\ :\bb{Z}_{4}$ & $30$ & $\bb{F}_{5}^{2}\ :Q_{8}$& $10$\\
\hline
$\bb{F}_{5}^{2}\ :(\SL(2,3)\ : \bb{Z}_{2})$ & $\bb{F}_{5}\ :(\bb{Z}_{4}\ : \bb{Z}_{2})$ & $30$ & $\bb{F}_{5}^{2}\ :(Q_{8}\ : \bb{Z}_{2})$ & $10$ \\
\hline
$\bb{F}_{29}^{2}\ :(\SL(2,5)\times \bb{Z}_{7})$ & $\bb{F}_{29}\ :(\bb{Z}_{4}\times \bb{Z}_{7})$ & $29*30$ & $\bb{F}_{29}^{2}\ :(Q_{8}\times \bb{Z}_{7})$ & $29*2$ \\
\hline
$\bb{F}_{29}^{2}\ :(\SL(2,5)\circ \bb{Z}_{28})$ & $\bb{F}_{29}\ :(\bb{Z}_{4}\circ\bb{Z}_{28})$ & $29*30$ & $\bb{F}_{29}^{2}\ :(Q_{8}\circ \bb{Z}_{28})$ & $29*2$\\ 
\hline
$\bb{F}_{3}^{3}:\A_4$ & $\bb{F}_{3}^{2}: \bb{Z}_{2}$ & $18$ & $\bb{F}_{3}^{3}: \bb{Z}_{2}^{2}$ & $6$\\
\hline 
\end{tabular}
\caption{Groups with large intersecting subgroups.}
\label{larget}
\end{table}
\normalsize
We begin by recalling some general results about subgroups of $\SL(2,p)$. Let $p$ be a prime, then the following hold:\\ (a)if $p\equiv \pm 1 \pmod{10}$, then $\SL(2,p)$ has a subgroup isomorphic to $\SL(2,5)$; and\\ (b)  if $p\equiv \pm 1 \pmod{4}$, then $\SL(2,p)$ has a subgroup isomorphic to $\SL(2,3)$. Throughout this section, given $a,b \in \bb{F}_{p}^{2}$, by $(a,b)^{\dagger}$ we denote the transpose of the row vector $(a,b)$.

The Sylow-2-subgroups of $\SL(2,3)$ and $\SL(2,5)$ are isomorphic to the Quaternion group $Q_{8}$. The polynomial $x^{2}+1$ is the minimal polynomial of any order $4$ element of $\SL(2,p)$. Thus all elements of order $4$ in $\SL(2,p)$ are conjugate. So if $Q< \SL(2,p)$, with $Q \cong Q_{8}$, then every element of $Q$ is $\SL(2,p)$-conjugate to an element of $V$, where $V<Q$ is an order $4$ subgroup.

Let $p\equiv \pm 1 \pmod{4}$ with $p>3$, and let $S\cong \SL(2,3)$ be a subgroup of $\SL(2,p)$. An element $M\in \SL(2,p)$ fixes a non-zero vector of $\bb{F}_{p}^{2}$ if and only if it has $1$ as an eigenvalue. Thus $M$ fixes a non-zero vector if and only if $M$ is conjugate to an element of $\left\{\begin{pmatrix}
1 & b\\
0 & 1
\end{pmatrix}|\ b\in \bb{F}_{p} \right\}$. So if $M\neq I$ fixes a non-zerovector, then the order of $M$ is $p$. This shows that $S$ acts semi-regularly on $\bb{F}^{2}_{p}\setminus \{(0,0)^{\dagger}\}$. A similar argument shows that, if $p \equiv \pm 1 \pmod{10}$, then any subgroup $T \cong \SL(2,5)$ of $\SL(2,p)$ acts semi-regularly on $\bb{F}^{2}_{p}\setminus \{(0,0)^{\dagger}\}$. 
\begin{lemma}\label{fff}
\begin{enumerate}[(\rm{a})]
\item If $p\equiv \pm 1 \pmod{4}$, then any subgroup $S \cong \SL(2,3)$ of $\SL(2,p)$ acts semi-regularly on $\bb{F}^{2}_{p}\setminus \{(0,0)^{\dagger}\}$.
\item  If $p\equiv \pm 1 \pmod{10}$, then any subgroup $T \cong \SL(2,5)$ of $\SL(2,p)$ acts semi-regularly on $\bb{F}^{2}_{p}\setminus \{(0,0)^{\dagger}\}$.
\end{enumerate}
\end{lemma}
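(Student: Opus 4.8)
The plan is to prove both parts by a single uniform argument. A subgroup $R\leqslant\SL(2,p)$ acts semi-regularly on $\bb{F}_{p}^{2}\setminus\{(0,0)^{\dagger}\}$ precisely when no non-identity element of $R$ fixes a non-zero vector, so for part~(a) (with $R\cong\SL(2,3)$, and necessarily $p>3$) and for part~(b) (with $R\cong\SL(2,5)$, and hence $p\geqslant 11$) it suffices to check that the identity is the only element of $R$ having $1$ as an eigenvalue.

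The first step is to determine which elements of $\SL(2,p)$ have $1$ as an eigenvalue. For $M\in\SL(2,p)$ the characteristic polynomial is $x^{2}-(\tr M)x+1$, which has $1$ as a root exactly when $\tr M=2$; in that case it equals $(x-1)^{2}$, so $(M-I)^{2}=0$ by Cayley--Hamilton. Hence if such an $M$ is not the identity we may write $M=I+N$ with $N\neq 0$ and $N^{2}=0$, giving $M^{k}=I+kN$ for all $k$, so $M$ has order exactly $p$. In particular, every non-identity element of $\SL(2,p)$ that fixes a non-zero vector has order $p$.

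The second step is an elementary order count. We have $|\SL(2,3)|=24=2^{3}\cdot 3$ and $|\SL(2,5)|=120=2^{3}\cdot 3\cdot 5$. In case~(a) the hypothesis $p\equiv\pm1\pmod 4$ makes $p$ odd, and with the standing assumption $p>3$ this gives $p\nmid 24$; in case~(b) the hypothesis $p\equiv\pm1\pmod{10}$ forces $p\geqslant 11$, hence $p\nmid 120$. Either way $p\nmid|R|$, so by Lagrange's theorem $R$ has no element of order $p$. Together with the first step this shows that no non-identity element of $R$ fixes a non-zero vector, and therefore $R$ acts semi-regularly on $\bb{F}_{p}^{2}\setminus\{(0,0)^{\dagger}\}$, proving (a) and (b).

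I do not expect any genuine obstacle here: the lemma is essentially a formal repackaging of the trace computation already carried out in the paragraph preceding its statement, and the only thing requiring (routine) attention is the verification that $p$ does not divide $|R|$ — together with the tacit hypothesis $p>3$ in part~(a), which cannot be dropped, since $\SL(2,3)$ is a subgroup of itself and plainly contains order-$3$ elements with non-zero fixed vectors.
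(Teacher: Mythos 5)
Your proof is correct and follows essentially the same route as the paper's (the paper establishes, in the paragraph preceding the lemma, that a non-identity element of $\SL(2,p)$ fixing a non-zero vector is conjugate to a unipotent upper-triangular matrix and hence has order $p$, then concludes exactly as you do; your Cayley--Hamilton computation is just a slightly different way of getting the same order-$p$ claim). Your explicit remark that part~(a) tacitly requires $p>3$ matches the paper, which also inserts ``with $p>3$'' in its argument even though the hypothesis is omitted from the lemma statement.
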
     

\begin{example}
{\rm
(1) Let $S$ be a subgroup of $\SL(2,5)$ which is isomorphic to $\SL(2,3)$. Consider the elements $\mf{i}=\begin{pmatrix}
0 & 1\\
-1 & 0
\end{pmatrix}$ and $\mf{j}=\begin{pmatrix}
2 & 0\\
0 & 3
\end{pmatrix}$ of $\SL(2,5)$. The subgroup $Q$ generated by $\mf{i},\mf{j}$ is isomorphic to the Quaternion group. As Sylow-2-subgroups of both $\SL(2,3)$ and $\SL(2,5)$ are isomorphic to $Q$, we may choose $S$ to contain $Q$. Let $V:=\langle \mf{j} \rangle$. We now consider the action of $G:= \bb{F}_{5}^{2} {:} S$ on the coset space $\Ome$ of $H:= \bb{F}_{5} {:} V$. As $V<Q\triangleleft S$, we see that $\cup_{g\in G}H^{g} \subset K:= \bb{F}_{5}^{2} \times Q$. As any element of $Q$ is $S$-conjugate to an element of $V$, we see that any element of $K$ is $G$-conjugate to an element of  $\{(v, \pm \mf{j}), (v, \pm I)|\ v\in \bb{F}_{5}^{2}\}$. We observe that given $a,b \in \bb{F}_{5}$, we have
$$((0,a)^{\dagger},-\mf{i})((a,b)^{\dagger}, \pm\mf{j})((-a,0)^{\dagger},\mf{i})=((-b,0)^{\dagger},\ \mp\mf{j})\in H.$$
If the action of $S$ on $\bb{F}_{5}^{2}\setminus \{(0,0)^{\dagger}\}$ is transitive, then every element of the form $(v, \pm I)$ is conjugate to $((1,0)^{\dagger}, \pm I) \in H$. Thus if this is true, we have $K := \cup H^{g}$ and thus $K$ has to be a maximum intersecting set. From Lemma \ref{fff}, $S$ acts semi-regularly on $\bb{F}_{5}^{2} \setminus \{(0,0)^{\dagger}\}$. Since $|S|=5^2-1$, $S$ acts regularly, and thus transitively on $\bb{F}_{5}^{2} \setminus \{(0,0)^{\dagger}\}$. So in this case,
$$\rho(G, \Ome)= |K|/|H|= 10 =|\Ome|/3 > \sqrt{|\Ome|}.$$ This example satisfies the parameters in row 2 of Table~\ref{larget}.

(2) Let $\mf{b}=\begin{pmatrix}
1 &0\\
0 &-1
\end{pmatrix} \in \GL(2,5)$. Observing that $\mf{b}$ normalizes $S=\langle\{\mf{i},\mf{j}\} \rangle$, we note that $\langle S, \mf{b}\rangle \cong \SL(2,3) {:}\bb{Z}_{2}$, $\langle Q, \mf{b}\rangle \cong Q_{8} \ :\bb{Z}_{2}$, and $\langle V, \mf{b}\rangle \cong \bb{Z}_{4} {:}\bb{Z}_{2}$. Consider the action of $\tilde{G}:= \bb{F}_{5}^{2}{:}\langle S, \mf{b}\rangle$ on the coset space $\tilde{\Omega}$ of the subgroup $\tilde{H}:= \bb{F}_{5}{:}\langle S, \mf{b}\rangle \cong \bb{F}_{5}^{2}{:}(\bb{Z}_{4}{:}\bb{Z}_{2})$. We now show that $\tilde{K}:=\bb{F}_{5}^{2}{:}\langle Q, \mf{b}\rangle \cong \bb{F}_{5}^{2}{:}(Q_{8}{:}\bb{Z}_{2})$ is a maximum intersecting set. Observing that $\langle V, \mf{b}\rangle< \langle Q, \mf{b}\rangle\triangleleft \langle S, \mf{b}\rangle $, we can conclude that $\cup_{g \in \tilde{G}}\tilde{H}^{g} \subset \tilde{K}$. As any element of $\langle Q, \mf{b}\rangle$ is $\langle S, \mf{b}\rangle$-conjugate to an element of $\langle V, \mf{b}\rangle$, we see that any element of $\tilde{K}$ is $\tilde{G}$-conjugate to an element of $\{(v,h)|\ v\in \bb{F}_{5}^{2}\ \&\ h\in \tilde{H}\}$. As $S$ acts transitively on $\bb{F}_{5}^{2} \setminus \{(0,0)^{\dagger}\}$, we see an element of the form $(v, \pm I)$ is conjugate to $((1,0)^{\dagger}, \pm I) \in \tilde{H}$. As $\mf{jb}=\pm 2I$, every element of the form $(v, \pm 2I)$ is conjugate to $((1,0)^{\dagger}, \pm 2I) \in \tilde{H}$. Lastly, we observe that 
\begin{align*}
((0,a)^{\dagger},-\mf{i})((a,b)^{\dagger}, \pm\mf{j})((-a,0)^{\dagger},\mf{i})=((-b,0)^{\dagger},\ \mp\mf{j})\in \tilde{H}\\
((0,-b)^{\dagger},2I)((a,b)^{\dagger}, \pm \mf{b})((2b,0)^{\dagger},-2I)=((-b,0)^{\dagger},\ \mp\mf{j})\in \tilde{H},
\end{align*}
and thus conclude that every element of $\tilde{K}$ is conjugate to an element of $\tilde{H}$. Therefore $\tilde{K}=\cup_{g\in \tilde{G}}\tilde{H}^{g}$ and thus $\tilde{K}$ is a maximum intersecting set.

In this case, we have
$$\rho(\tilde{G}, \tilde{\Ome})= |\tilde{K}|/|\tilde{H}|= 10 =|\tilde{\Ome}|/3 > \sqrt{|\tilde{\Ome}|}.$$ 
This example satisfies the parameters in row 3 of Table~\ref{larget}.
}
\end{example}
\begin{example}
{\rm
(1) Let $T$ be a subgroup of $\SL(2,29)$ which is isomorphic to $\SL(2,5)$.

Let $\eta$ be a primitive $4$-th root of unity in $\bb{F}_{29}$, and let $\zeta$ be a primitive  $7$-th root.  
Consider the elements  $\mf{i}=\begin{pmatrix}
0 & 1 \\
-1 & 0
\end{pmatrix} $, $\mf{j}=\begin{pmatrix}
\eta & 0\\
0 & -\eta
\end{pmatrix}$ and $z:=\begin{pmatrix}
\zeta & 0\\
0 & \zeta
\end{pmatrix}$ of $\GL(2,29)$. The group $Q=\langle \mf{i}, \mf{j}\rangle \cong Q_8$ is a Sylow-2-subgroup of $\SL(2,29)$. We recall that Sylow-2 subgroup of $\SL(2,5)$ is also isomorphic to $Q_8$. Thus we may assume that $Q<T$. As $z$ is a central element of order $7$, we see that $X :=\langle S,z\rangle\cong \SL(2,5)\times \langle z \rangle$. Let $\{e_{1},e_{2}\}$ be the basis of elementary vectors for $\bb{F}_{29}^{2}$.
We now consider the action of $G:=\bb{F}_{29}^2 {:} X\cong \bb{F}_{29}^2 {:} (\SL(2,5) \times \bb{Z}_{7})$ on the coset space $\Ome$ of $H:=\langle e_{1}\rangle {:} \langle \mf{j},z\rangle \cong \bb{F}_{29} {:} (\bb{Z}_{4} \times \bb{Z}_{7})$. 

Consider the subgroup $K:= \bb{F}_{29}^{2} {:} Q\langle z \rangle \cong \bb{F}_{29}^{2} {:} (Q_{8} \times \bb{Z}_{7})$. We observe that $s^{-1}\mf{j}s= \mf{i} \mf{j}$ and $s\mf{j}s^{-1}=\mf{i}$, and thus any element of $K$ is conjugate to an element of \[\{(v,\pm \mf{j}z^{l}), (v, \pm z^{l})|\ v \in \bb{F}_{29}^{2}\ \text{and}\ 0\leqslant l \leqslant 6\}.\] Given $a,b \in \bb{F}_{29}$, we see that 
$$((-a/11)e_{2},-\mf{i})(ae_{1}+be_{2}, \pm\mf{j}z^{l})((a/11)e_{1},\mf{i})=(-be_{1}, \mp\mf{j}z^{l})\in H.$$  

Given $t \in X$, as $\pm z^{l}$ is a central element, we have $(0,t)(v,\pm z^{l})(0,t^{-1})=(t\cdot v,\ \pm z^{l})$. We claim that the action of $X$ on non-zero vectors of $\bb{F}_{q}^{29}$ is regular, and hence for any $v \in \bb{F}_{29}^{2}$, there exists $t\in T$ such that $t\cdot v =e_{1}$. This shows that by Lemma \ref{Int-subgp}, $K$ is an intersecting subgroups. So we have 
$$\rho(G,\Omega) \geq |K|/|H|=29\times 2 > \sqrt{|\Ome|}=\sqrt{29\times 30}.$$

We now show that the action of $X$ on $\bb{F}_{29}^{2} \setminus \{0\}$ is regular. By Lemma~\ref{fff}, $T$ acts semi-regularly. Since $z$ does not fix any non-zero vector, $X=\langle T,z \rangle$ acts semi-regularly. As $|T|=|\bb{F}_{29}^{2}|-1$, we conclude that $T$ acts regularly on the set of non-zero vectors of $\bb{F}_{29}^2$.
This example satisfies the parameters in row 4 of Table~\ref{larget}.

(2) Replacing $\zeta$ with a primitive $28$th root of unity will give us the example satisfying the parameters in row 5 of Table~\ref{larget}. 
}
\end{example}
\begin{example}{\rm 
Let $\{e_{1},e_{2},e_{3},e_{4}\}$ be the standard basis of elementary vectors of the the finite vector space $\bb{F}_{3}^{4}$. Consider the natural action of $\rm{S}_{4}$ on the finite vector space $\bb{F}_{3}^{4}$. We note that $\A_{4}$ fixes $f:=e_{1}+e_{2}+e_{3}+e_{4}$. Now the quotient space $V=\bb{F}_{3}^{4}/\langle f \rangle$ is an $\A_{4}$-space. For $i=1,2,3$, let $f_{i}:=e_{i}+ \langle f \rangle$ and let $f_{4}:=-(f_{1}+f_{2}+f_{3})$, then the action of $\A_{4}$ on $V$ satisfies $\sigma\cdot(a_{1}f_{1}+a_{2}f_{2}+a_{3}f_{3})=\sum a_{i}f_{\sigma(i)}$, where $\sigma \in \A_{4}$ and $a_{1},a_{2},a_{3} \in \bb{F}_{3}$. We now consider the group $G= V {:} \A_{4}$. Let $C=\langle (1,2)(3,4)\rangle<\A_{4}$ , $\D$ be the Sylow-2-subgroup of $\A_{4}$, and $W:=\langle f_{1},\ f_{2}\rangle$. We now consider the subgroup $H:=W {:} C$ and the transitive action of $G$ on the coset space $\Ome:=[G \ :\ H]$. As all the conjugates of $(1,2)(3,4)$ lie in $\D$, we see that all conjugates of elements of $H$ must lie in the subgroup $K:=V {:} \D$, that is, $\cup_{g\in G} H^{g} \subset K$.

Now any $\pi\in \D$ is $\A_{4}$-conjugate to an element of $C$, so every element of $K$
is of the form $(af_{1}+bf_{2}+cf_{3},\pi)$, where $\pi$ is either $()$ or $(1,2)(3,4)$ (here $()$ denotes the identity permutation).
Recalling that $f_{4}=-f_{1}-f_{2}-f_{3}$, we now observe that
\begin{align*}
\left(\frac{(b-a)}{2} f_{3},\ (1,3)(2,4)\right)\left(af_{1}+bf_{2}+cf_{3},\ (1,2)(3,4)\right)\left(-\frac{(b-a)}{2}f_{1},\ (1,3)(2,4)\right)\in H.
\end{align*}
Given any $v \in V$ and $\pi \in \A_{4}$, we have $$(v,\pi)(af_{1}+bf_{2}+cf_{3},\ ())(-\pi^{-1}(v),\pi^{-1})=(af_{\pi(1)}+bf_{\pi(2)}+cf_{\pi(3)},\ ()).$$ Let $a_{\pi},b_{\pi}, c_{\pi}$ be such that  $af_{\pi(1)}+bf_{\pi(2)}+cf_{\pi(3)}=a_{\pi}f_{1}+b_{\pi}f_{2}+c_{\pi}f_{3}$. Straightforward computation shows that $U:=\{c_{\pi}|\ \pi \in \A_{4}\}=\{\pm a,\ \pm b\,\ \pm c,\ \pm(a-b),\ \pm(b-c),\ \pm(a-c)\}$. It is not difficult to check that for $(a,b,c) \in \bb{F}_{3}$, we have $0 \in U$, that is, there is a $\sigma \in \A_{4}$ such that $c_{\sigma}=0$. Then we have
$$(0,\sigma)(af_{1}+bf_{2}+cf_{3},\ ())(0,\sigma^{-1})=(a_{\sigma}f_{1}+b_{\sigma}f_{2},\ ()) \in H.$$

We just proved that every element $K$ is conjugate to some element of $H$. So by Lemma \ref{Int-subgp}, $K$ is an intersecting subgroup. Since, $\cup_{g\in G} H^{g} \subset K$, we must have $\cup_{g\in G} H^{g} =K $, and thus $K$ has to be a maximum intersecting set containing the identity. Therefore , $$\rho(G, \Omega)=|K|/|H|=6 >\sqrt{|\Omega|}=\sqrt{18}.$$ (We want to mention that $G\cong \bb{Z}_{3}^{3}:\A_{4}$, $H\cong \bb{Z}_{3}^{2}:\bb{Z}_{2}$, and $K\cong \bb{Z}_{3}^{3}:\bb{Z}_{2}^{2}$.) 
This example satisfies the parameters in row 6 of Table~\ref{larget}.
}
\end{example}

\section{Proofs of Theorem~\ref{qp-EKR}.}

Let $G$ be a quasiprimitive permutation group on $\Ome$. Then from the O'Nan-Scott theorem for quasiprimitive groups (c.f \cite{Pra1993}), we see that either (a) $G$ has a subnormal regular subgroup, (b) $G$ is an almost simple group, or (c) $G$ is a quasiprimitve group of product type action.

If $G$ has a regular subgroup, by Lemma \ref{regekr}, $G$ satisfies the EKR property. So every quasiprimitive group that does not satisfy the EKR property is either almost simple or is of product action type. Now, in order to finish the proof of Theorem~\ref{qp-EKR}, we need to show that
sets \[\{\rho(G, \Ome)|\ \text{$G\leqslant \Sym(\Ome)$ is an almost simple primitive permutation group}\}\] and \[\{\rho(G, \Ome)|\ \text{$G\leqslant \Sym(\Ome)$ is primitive permutation of product action type}\}\] are unbounded.

We now construct primitive almost simple groups $G \leqslant \Sym(\Ome)$ with large $\rho(G,\Ome)$.

\begin{construction}\label{ASC}{\rm
Let $q=2^{f}\geqslant 4$ and let $\bb{F}$ be the field of order $q$. We consider the action of $T=\PGL(2,q)\cong \PSL(2,q)$ on pairs of projective points, that is, on $\Delta=\{\{P,S\}| \ P,S \in \PG(1,q)\ \& \ P\neq S\}$. This action is a transitive permutation action.  Let $S$ be the maximal parabolic subgroup \[S=\left\{\begin{pmatrix}
1 & a\\
0 & b
\end{pmatrix}\ |\ a\in \bb{F}\ \& \ b \in \bb{F}^{\times}\right\}.\]}
\end{construction} 
Consider $\alpha=\{[1:0], [0:1]\}$, we note that
$T_{\alpha}=\frac{|T|}{|\Delta|}=2(q-1)$. Let $\gamma$ be a generator of the multiplicative group $\bb{F}^{\times}$.
We now observe that $r=\begin{pmatrix}
0 & 1\\
1 & 0
\end{pmatrix}$ and $s=\begin{pmatrix}
1 &0 \\
0 & \gamma
\end{pmatrix}$ fix $\alpha$ and thus $T_{\alpha} \cong \D_{2(q-1)}$. Since $T_{\alpha}$ is a maximal subgroup, $T$ is a simple primitive permutation group on $\Delta$.

Let $a \in \bb{F}$, then since characteristic of $\bb{F}$ is $2$, we see that $\begin{pmatrix}
1 & a\\
0 & 1
\end{pmatrix}$ fixes the set $\{[a:1],[0:1]\}$. When $b\neq 1 \in \bb{F}$, the element $\begin{pmatrix}
1 & a\\
0 & b
\end{pmatrix}$ of $S$ fixes $\{[1:0], [a:b-1]\}$.
 Since every element of $S$ fixes a point of $\Delta$, Lemma~\ref{Int-subgp} implies that $S$ is an intersecting subgroup. Thus $\rho(T,\Delta) \geqslant \dfrac{|S|}{|T_\alpha|}=\dfrac{q}{2}$. Therefore the set \[\{\rho(G, \Ome)|\ \text{$G\leqslant \Sym(\Ome)$ is an almost simple primitive permutation group}\}\] is unbounded. 
\begin{remark}{\rm
In the group $T$ constructed above, every element of $T_{\alpha}$ is also conjugate to some element in $S$. Thus we see that $X$ is an intersection sets for the permutation action of $T$ on $\Delta$ if and only if it is an intersection sets for the permutation action of $T$ on $\Gamma:=[T:S]$. The  action of $T$ on $\Gamma$ is a $2$-transitive action. In \cite{MS2011}, it was shown that the permutation action of $T$ on $\Gamma$ satisfies the strict-EKR property. Thus $S$ is a maximum intersection set for the action of $T$ on $\Delta$ and moreover every maxmimal intersection set is a coset of some conjugates of $S$. Therefore in this case, $\rho(T,\Delta)=q$. We observe that $\rho(T,\Delta)< \dfrac{\sqrt{|\Delta|}}{\sqrt{2}}$ and that $\lim\limits_{q\to \infty} \dfrac{\rho(T,\Delta)}{\sqrt{|\Delta|}}= \dfrac{1}{\sqrt{2}}$.}
\end{remark}

We now turn our focus to primitive groups of product type action.
\begin{construction}\label{PAC}{\rm
 Let $T$, $\Delta$, $\alpha$, $T_{\alpha}$, and $S$ be as in construction~\ref{ASC}. Given a prime $\ell$, we consider \[\begin{cases}P= T {\wr} \bb{Z}_{\ell}= T^{\ell}{:}\bb{Z}_{\ell}\\
\Lambda= \prod\limits_{l \in \bb{Z}_{\ell}} \Delta \end{cases}.\]
Then $P$ is a quasiprimitve group on $\Pi$ of product action type. The stabilizer of $\beta=(\alpha,\alpha,\ldots, \alpha)$  is $P_{\beta}=T_{\alpha}^{\ell}:\bb{Z}_{\ell}$. Pick the subgroup $\mathfrak{S}=S^{\ell}{:}\bb{Z}_{\ell}$. }
\end{construction}

 Now, as $S$ is an intersecting subgroup in $T$, by Lemma \ref{paty}, $\mathfrak{S}$ is an intersecting subgroup. We now have $\dfrac{|\mathfrak{S}|}{|P_{\beta}|}=\left(\dfrac{q}{2}\right)^{\ell}$. Thus $\rho(P,\Lambda)\geqslant \dfrac{|\mathfrak{S}|}{|P_{\beta}|}=\left(\dfrac{q}{2}\right)^{\ell}$. Therefore the set \[\{\rho(G, \Ome)|\ \text{$G\leqslant \Sym(\Ome)$ is a primitive permutation of product action type}\}\] is unbounded. 
 
 This concludes the proof of Theorem~\ref{qp-EKR}. \qed
\section{Proof of Theorem~\ref{PSLEKR}}\label{psl}
 In this section, we investigate EKR properties of primitive permutation groups isomorphic to $G=\PSL(2,p)$, where $p$ is a prime bigger than equal to $5$. Any primitive action of $G$ is equivalent to an action of $G$ on the coset space $[G:H]$ of a maximal subgroup $H$.

The subgroup structure of $\PSL(2,p)$ is well known. The following result describing the subgroups may be found in many classical texts such as \cite{Dickson, Huppert}.

\begin{theorem}\label{sub}{\rm(\cite[Hauptsatz 8.27]{Huppert})} For a prime $p \geqslant 5$, the subgroups of $G=\PSL(2,p)$ are isomorphic to one of the following groups:
\begin{enumerate}[{\rm(i)}]
\item $\bb{Z}_{p}{:}\bb{Z}_{\ell}$, where $\ell$ divides $\frac{p-1}{2}$;
\item $\bb{Z}_{\ell}$, where $\ell$ divides $\frac{p-1}{2}$ or $\frac{p+1}{2}$ ;
\item $\D_{\ell}$, where $\ell$ divides $p-1$ or $p+1$;
\item subgroups of $\A_{5}$, when $p \equiv \pm 1 \pmod{10}$;
\item $\A_{4}$, provided $p \equiv \pm 1 \pmod{4}$;
\item $\rm{S}_{4}$, provided $p \equiv \pm 1 \pmod{8}$.
\end{enumerate}
\end{theorem}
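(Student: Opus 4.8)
The result is Dickson's classical description of the subgroups of $\PSL(2,p)$, and the plan is to reprove it by analysing how a subgroup acts on the natural $2$-dimensional module. Write $G=\PSL(2,p)$, so $|G|=\tfrac12 p(p-1)(p+1)$; since $G$ itself fails to be of the listed form when $p>5$, I read the statement as describing the proper subgroups $H<G$, and I assume $H\neq 1$. Let $\tilde H\leqslant\SL(2,p)$ be the full preimage of $H$ under the quotient $\SL(2,p)\twoheadrightarrow G$ (so $-I\in\tilde H$), and let $W=\FF_p^{2}$ be the natural module. The argument splits according to whether $p$ divides $|H|$.

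First I would dispose of the case $p\mid|H|$. Then $H$ contains a Sylow $p$-subgroup $P\cong\ZZ_p$ of $G$. If $H$ contained a second Sylow $p$-subgroup $P'\neq P$, then, lifting to $\SL(2,p)$ and using the standard fact that two distinct transvection subgroups generate $\SL(2,p)$ (they have distinct centre lines, so a change of basis puts them in opposite unipotent position and Gauss elimination finishes), we would get $H=G$, contradicting $H<G$. Hence $P\trianglelefteq H$ and $H\leqslant N_G(P)$; since $N_G(P)\cong\ZZ_p{:}\ZZ_{(p-1)/2}$ is a Frobenius group with kernel $P$ and $p\mid|H|$, the subgroup $H$ must contain $P$, so $H\cong\ZZ_p{:}\ZZ_\ell$ with $\ell\mid\tfrac{p-1}{2}$ — case~(i).

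Now suppose $p\nmid|H|$. As $p$ is odd, $p\nmid|\tilde H|$, so Maschke's theorem makes $W$ a semisimple $\FF_p[\tilde H]$-module, and I would distinguish three configurations. If $\tilde H$ fixes an $\FF_p$-line of $W$, then semisimplicity splits $W$ into two invariant lines, so $\tilde H$ lies in a split torus of $\SL(2,p)$, cyclic of order $p-1$; thus $H$ is cyclic of order dividing $\tfrac{p-1}{2}$ — case~(ii). If $\tilde H$ fixes no $\FF_p$-line but is imprimitive — it preserves an unordered pair of lines over $\overline{\FF_p}$ — then $\tilde H$ normalises a maximal torus of $\SL(2,p)$ (a split one if the pair is rational over $\FF_p$, a non-split one if over $\FF_{p^2}$); the torus normalisers have orders $2(p-1)$ and $2(p+1)$ and project onto dihedral groups in $G$, so $H$ is cyclic or dihedral of order dividing $p-1$ or $p+1$ — cases~(ii) and~(iii). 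Finally, if $\tilde H$ is primitive, then $H$, regarded as a subgroup of $\PGL(2,\overline{\FF_p})$, is a finite primitive subgroup containing no transvection (as $p\nmid|H|$), hence one of the polyhedral groups $\A_4$, $\Sy_4$, $\A_5$ by the classical classification of finite primitive subgroups of $\PGL_2$; checking which of these actually embed over the prime field (by trace computations or by exhibiting explicit matrices) yields $\A_4$ when $p\equiv\pm1\pmod4$, $\Sy_4$ when $p\equiv\pm1\pmod8$, and a subgroup of $\A_5$ when $p\equiv\pm1\pmod{10}$ — cases~(iv), (v), (vi).

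The routine part is the handling of the reducible and imprimitive configurations, which is just bookkeeping with Borel and torus subgroups together with two-dimensional representation theory; the hard part will be the primitive case — establishing that a finite primitive irreducible subgroup of $\SL(2,\overline{\FF_p})$ has polyhedral image, and pinning down exactly when $\A_4,\Sy_4,\A_5$ sit inside $\PSL(2,p)$ rather than only inside $\PGL(2,p)$ or requiring an auxiliary square root. This programme is carried out in the classical references \cite{Dickson} and \cite[Hauptsatz~8.27]{Huppert}; there one works throughout with the $2$-transitive action of $G$ on $\PG(1,p)$ — where point stabilisers are cyclic of order $\tfrac{p-1}{2}$ and every non-identity element fixes at most two points — which streamlines the reducible and imprimitive cases but still needs the polyhedral classification for the primitive one.
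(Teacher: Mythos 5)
The paper does not prove this statement at all: it is quoted verbatim from Huppert \cite[Hauptsatz 8.27]{Huppert} (equivalently Dickson \cite{Dickson}) and used in Section~5 purely as a citation, so there is no in-paper argument to compare yours against. Your outline is the standard module-theoretic route to Dickson's theorem, and the parts you work out are sound: the Sylow argument in the case $p\mid|H|$ (two distinct unipotent root subgroups generate $\SL(2,p)$, hence $H\leqslant \N_G(P)\cong\ZZ_p{:}\ZZ_{(p-1)/2}$), the Maschke reduction in the $p'$-case, and the identification of the reducible and imprimitive configurations with subgroups of a split torus and of the two torus normalisers (giving the cyclic and dihedral cases). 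Your reading of the statement as concerning proper subgroups is also the right one — for $p>5$ the group $G$ itself has none of the listed forms, and for $p=5$ clause (iv) would otherwise fail.

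The genuine incompleteness is exactly where you flag it: the primitive case. You assert, but do not prove, that a primitive irreducible $p'$-subgroup of $\SL(2,\overline{\FF_p})$ has polyhedral image $\A_4$, $\Sy_4$ or $\A_5$, and you do not carry out the trace and congruence computations that determine precisely when each of these embeds in $\PSL(2,p)$ (rather than only in $\PGL(2,p)$ or over an extension field) — e.g.\ that $\Sy_4$ requires $\sqrt{2}\in\FF_p$, i.e.\ $p\equiv\pm1\pmod 8$, and $\A_5$ requires $\sqrt{5}\in\FF_p$, i.e.\ $p\equiv\pm1\pmod{10}$. Those are the substantive parts of Dickson's theorem, and deferring them to \cite{Dickson,Huppert} makes the proposal a correct skeleton rather than a complete proof. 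Since the paper itself treats the theorem as a classical citation, this deferral is consistent with how the result is actually used, but it should be stated as such rather than presented as a proof.
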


\begin{corollary} \label{max} For a prime $p \geqslant 5$, the maximal subgroups of $G=\PSL(2,p)$ are:
\begin{enumerate}[{\rm(i)}]
\item $\bb{Z}_{p}{:}\bb{Z}_{\frac{p-1}{2}}$;
\item $\D_{p-1}$, for $p\geqslant 13$;
\item $\D_{p+1}$, for $p \neq 7$;
\item $\A_{5}$, for $p \equiv \pm 1 \pmod{10}$;
\item $\A_{4}$, for $p \equiv \pm 3 \pmod{8}$;
\item $\rm{S}_{4}$, for $p \equiv \pm 1 \pmod{8}$.
\end{enumerate}
\end{corollary}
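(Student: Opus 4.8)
The plan is to extract Corollary~\ref{max} directly from the list of subgroups in Theorem~\ref{sub}: a subgroup $M<G$ is maximal exactly when it lies in no proper subgroup of $G$ other than itself, so for each isomorphism type appearing in Theorem~\ref{sub} I would single out its largest representatives and decide which of those embed in a strictly larger proper subgroup. Two observations run through the whole argument. First, $|G|=\tfrac12 p(p-1)(p+1)$, and for $p\geq5$ only the subgroups of type~(i) in Theorem~\ref{sub} have order divisible by $p$. Second, the Borel subgroup $B=\bb{Z}_p{:}\bb{Z}_{(p-1)/2}$ has a normal Sylow $p$-subgroup, so every subgroup of $B$ of order coprime to $p$ is cyclic; in particular no conjugate of $B$ contains any of $\D_{p-1}$, $\D_{p+1}$, $\A_4$, $\mathrm{S}_4$, $\A_5$.

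The linear and torus types are disposed of immediately. Every subgroup of type~(i) lies in a conjugate of $B$, and $B$ itself is the stabilizer of a projective point in the natural $2$-transitive, hence primitive, action of $G$ on $\PG(1,p)$, so $B$ is maximal. Every cyclic subgroup in type~(ii) lies inside $\D_{p-1}$ or $\D_{p+1}$, hence is never maximal; and a dihedral subgroup $\D_\ell$ in type~(iii) with $\ell$ a proper divisor of $p-1$ (resp. of $p+1$) lies inside $\D_{p-1}$ (resp. $\D_{p+1}$), so $\D_{p-1}$ and $\D_{p+1}$ are the only surviving dihedral candidates.

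It remains to settle $\D_{p-1}$, $\D_{p+1}$, $\A_5$, $\mathrm{S}_4$, $\A_4$. The structural inputs I would use are: $\A_5$ is the only non-solvable type in Theorem~\ref{sub}, so any proper subgroup of $G$ containing a copy of $\A_5$ equals it, whence $\A_5$ is maximal whenever it occurs, i.e. for $p\equiv\pm1\pmod{10}$; a dihedral group of order $m$ embeds in $\A_4$ only for $m\in\{1,2,4\}$, in $\mathrm{S}_4$ only for $m\in\{1,2,4,6,8\}$, and in $\A_5$ only for $m\in\{1,2,4,6,10\}$; and neither $\A_4$ nor $\mathrm{S}_4$ is dihedral or (by the remark on $B$) lies in a conjugate of $B$, while $\A_4<\mathrm{S}_4$, $\A_4<\A_5$, and $\mathrm{S}_4\not\leq\A_5$ because $24\nmid60$. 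Feeding in the congruence conditions of Theorem~\ref{sub} for the occurrence of $\A_4,\mathrm{S}_4,\A_5$, one gets: $\D_{p-1}$ fails to be maximal exactly for $p\in\{5,7,11\}$ --- lying in $\A_4$, $\mathrm{S}_4$, $\A_5$ respectively --- so it is maximal iff $p\geq13$; $\D_{p+1}$ fails to be maximal only for $p=7$, where it lies in $\mathrm{S}_4$, so it is maximal iff $p\neq7$; $\mathrm{S}_4$ is maximal whenever it occurs, i.e. for $p\equiv\pm1\pmod8$; and $\A_4$ is maximal precisely when neither $\mathrm{S}_4$ nor $\A_5$ occurs in $G$.

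The embedding facts for $\A_4,\mathrm{S}_4,\A_5$ and the arithmetic of which small primes satisfy each congruence are routine. The one step I expect to need real care is the last clause: to conclude that $\A_4$ is not maximal whenever $\A_5\leq G$, one must check that every copy of $\A_4$ in $G$ lies inside some copy of $\A_5$. This comes from a counting argument --- when $p\equiv\pm3\pmod8$ the Sylow $2$-subgroups of $G$ are Klein four-groups $V$ with $N_G(V)=\A_4$, so the $\A_4$-subgroups are in bijection with the $V$-subgroups and form a single conjugacy class of size $|G|/12$, whereas the $|G|/30$ copies of $\A_5$ in $G$ already contain $5\cdot|G|/30=|G|/6$ copies of $\A_4$ counted with multiplicity, forcing each $\A_4$ to lie in exactly two copies of $\A_5$ --- and this is why the $\A_4$ case should also carry the hypothesis $p\not\equiv\pm1\pmod{10}$ alongside $p\equiv\pm3\pmod8$.
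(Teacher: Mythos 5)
Your derivation is correct, and it is essentially the argument the paper leaves implicit: the corollary is stated with no proof at all, as an immediate consequence of Theorem~\ref{sub}, so your job was to supply the missing bookkeeping. The reductions for types (i)--(iii) (every $p'$-subgroup of the Borel is cyclic, cyclic groups sit in dihedral ones, smaller dihedral groups sit in $\D_{p\pm1}$), the solvability argument showing $\A_5$ is maximal whenever it is proper, the order obstruction $24\nmid 60$ for $\mathrm{S}_4\leqslant\A_5$, and the identification of the exceptional primes $5,7,11$ for $\D_{p-1}$ and $7$ for $\D_{p+1}$ via the maximal dihedral subgroups of $\A_4,\mathrm{S}_4,\A_5$ all check out.

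The most valuable part of your proposal is the final observation, and you are right to press it: clause (v) as printed is not correct. For $p\equiv\pm3\pmod 8$ the Sylow $2$-subgroups are Klein four groups with normalizer $\A_4$, so the copies of $\A_4$ form a single conjugacy class; if moreover $p\equiv\pm1\pmod{10}$, then since some (hence, by conjugacy, every) $\A_4$ lies inside a copy of $\A_5$, no $\A_4$ is maximal --- your double count making each $\A_4$ lie in exactly two copies of $\A_5$ is a finer version of this, but mere containment already suffices. Concretely, the maximal subgroups of $\PSL(2,11)$ are $\A_5$ (two classes), $\ZZ_{11}{:}\ZZ_5$ and $\D_{12}$, so $\A_4$ is not maximal there even though $11\equiv3\pmod 8$; the correct condition in (v) is $p\equiv\pm3\pmod 8$ together with $p\not\equiv\pm1\pmod{10}$, i.e. $p\equiv3,5,13,27,37\pmod{40}$. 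One small companion fact is still needed to certify that the corrected list is complete: for $p\equiv\pm1\pmod 8$ every copy of $\A_4$ lies inside a copy of $\mathrm{S}_4$, which justifies omitting $\A_4$ in that case; your proposal asserts this only implicitly. Finally, note that the error in (v) is harmless downstream: Case~5 of the proof of Theorem~\ref{PSLEKR} simply becomes vacuous for those primes where $\A_4$ fails to be maximal, since the theorem concerns primitive actions only.
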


It is also known (for eg. see \cite{Huppert}) that all cyclic subgroups of $\PSL(2,p)$ of the same order are conjugate. This fact along with Lemma \ref{Int-subgp} gives us the following result.
\begin{lemma}\label{pi}
Let $G=\PSL(2,p)$ be a transitive permutation group on $\Ome$, and $\o\in\Ome$.
Then  $S<G$ is an intersecting subgroup if and only if $\{|s|\mid s\in S\}\subseteq\{|h|\mid h\in G_\o\}$.
\end{lemma}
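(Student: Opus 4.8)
The plan is to derive the statement as an immediate consequence of Lemma~\ref{Int-subgp} together with the classical fact, just recalled, that any two cyclic subgroups of $\PSL(2,p)$ of the same order are conjugate in $G$. No new machinery is needed; the only point is to translate the order condition into the conjugacy condition of Lemma~\ref{Int-subgp}(c).

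First I would treat the forward implication. Assume $S<G$ is an intersecting subgroup. By part~(c) of Lemma~\ref{Int-subgp}, every element $s\in S$ is $G$-conjugate to some element $h\in G_\o$. Conjugate elements have equal order, so $|s|=|h|\in\{|h|\mid h\in G_\o\}$. Since $s\in S$ was arbitrary, this gives $\{|s|\mid s\in S\}\subseteq\{|h|\mid h\in G_\o\}$.

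For the converse, assume $\{|s|\mid s\in S\}\subseteq\{|h|\mid h\in G_\o\}$ and fix $s\in S$. Choose $h\in G_\o$ with $|h|=|s|$. Then $\langle s\rangle$ and $\langle h\rangle$ are cyclic subgroups of $G$ of the same order, hence conjugate in $G$; say $\langle s\rangle^{g}=\langle h\rangle$ for some $g\in G$. Then $s^{g}\in\langle h\rangle\leqslant G_\o$, so $s$ is conjugate to an element of $G_\o$. As $s\in S$ was arbitrary, Lemma~\ref{Int-subgp}(c) shows that $S$ is an intersecting subgroup, completing the proof.

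There is essentially no obstacle: the substantive input is the conjugacy of equal-order cyclic subgroups in $\PSL(2,p)$, which is already cited from \cite{Huppert}. The only subtlety worth flagging is that conjugacy of the subgroups $\langle s\rangle$ and $\langle h\rangle$ (rather than of the elements $s$ and $h$ themselves) already suffices, since the conjugating element carries $s$ into $\langle h\rangle\leqslant G_\o$, which is all that Lemma~\ref{Int-subgp}(c) demands.
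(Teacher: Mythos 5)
Your proof is correct and follows exactly the paper's (very brief) argument: the paper likewise deduces the lemma from Lemma~\ref{Int-subgp} together with the fact that cyclic subgroups of $\PSL(2,p)$ of equal order are conjugate. Your write-up simply makes explicit the step the paper leaves implicit, namely that conjugacy of $\langle s\rangle$ and $\langle h\rangle$ carries $s$ into $G_\o$.
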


Theorem \ref{sub}, Corollary \ref{max} and the above lemma equip us to find all intersecting subgroups of all primitive groups which are isomorphic to $G$.

Let $H$ be one of the maximal subgroups described in Corollary \ref{max}. We will now determine all intersecting subgroups with respect to the action of $G$ on $[G:H]$.

{\bf Case 1:} Let $H= \bb{Z}_{p} {:} \bb{Z}_{p-1}$. In this case, the action of $G$ on $[G:H]$ is $2$-transitive. In \cite{LPSX2018}, it was shown that this action satisfies the strict-EKR property.

{\bf Case 2:} Let $H=\D_{p-1}$ and $p\geqslant 13$. In view of Lemma \ref{pi}, a subgroup $S$ is an intersecting subgroup if and only if for all $n \in \{|s|\ | \ s\in S\}$, either $n=2$ or $n\mid \dfrac{p-1}{2}$. From Theorem \ref{sub} and Lemma \ref{pi}, we can now conclude that
\begin{itemize}
\item all cyclic intersecting subgroups are subgroups of $\bb{Z}_{\frac{p-1}{2}}$,
\item $\A_{4}$ is an intersecting subgroup if and only if $6 \mid \dfrac{p-1}{2}$ (note that $6$ is the exponent of $\A_{4}$),
\item $\rm{S}_{4}$ is an intersecting subgroup if and only if $12 \mid \dfrac{p-1}{2}$;
\item and $\A_{5}$ is an intersecting subgroup if and only if $30 \mid \dfrac{p-1}{2}$.
\end{itemize}

From this we conclude that in this case, weak-EKR property is satisfied whenever $p\neq 31$. All maximum intersecting subgroups of $\PSL(2,31)$ are of order $60$ are isomorphic to $\A_{5}$. We note that although $\D_{30}$ is a maximal subgroup (in $\PSL(2,31)$) which is an intersecting set, it is not a maximum intersecting subgroup since $|\D_{30}|<|A_5|$. We see that strict-weak-EKR holds whenever $p\neq 13, 31, 61$. We also observe that $\A_{4}$ is a maximum intersecting subgroup of $\PSL(2,13)$, and that $\A_{5}$ is a maximum intersecting subgroup of $\PSL(2,61)$.

{\bf Case 3:} Let $H=\D_{p+1}$ and $p\neq 7$.
Applying Theorem \ref{sub} and Lemma \ref{pi}, we can now conclude that
\begin{itemize}
\item all cyclic intersecting subgroups are subgroups of $\bb{Z}_{\frac{p+1}{2}}$,
\item $\A_{4}$ is an intersecting subgroup if and only if $6 \mid \dfrac{p+1}{2}$,
\item $\rm{S}_{4}$ is an intersecting subgroup if and only if $12 \mid \dfrac{p+1}{2}$;
\item and $\A_{5}$ is an intersecting subgroup if and only if $30 \mid \dfrac{p+1}{2}$.
\end{itemize}
Thus in this case, weak-EKR property holds whenever $p \neq 29$; and all maximum intersection subgroups of $\PSL(2,29)$ are isomorphic to $\A_{5}$. We note that although $\D_{30}$ is a maximal subgroup (in $\PSL(2,29)$) which is an intersecting set, it is not a maximum intersecting subgroup since $|\D_{30}|<|A_5|$. We can also conclude that strict-weak-EKR property holds whenever $p \neq 11, 23, 29, 59$. $\A_{4}$ is a maximum intersecting subgroup of $\PSL(2,11)$, $\rm{S}_{4}$ is a maximum intersecting subgroup of $\PSL(2,23)$, and $\A_5$ is a maximum intersecting subgroup of $\PSL(2,59)$. 

In the case $p \equiv 3\pmod{4}$, consider the subgroup $R=\bb{Z}_{p}{:}\bb{Z}_{\frac{p-1}{2}}$. Given $r \in R$, we have $gcd(|r|,p+1)=1$, and hence it does not fix any element of $[G:\D_{p+1}]$. Since $|[G:\D_{p+1}]|=|p(p-1)/2|$, we can see that $R$ is a regular subgroup. Therefore by Lemma~\ref{regekr}, we see that in this scenario, $G$ has the EKR property.

{\bf Case 4:} Let $p\equiv\pm 1 \pmod {10}$ and $H=\A_{5}$.
By Lemma \ref{pi}, a subgroup $S$ is an intersecting subgroup if and only if
each of its non-identity element is of order $2$, $3$, or $5$.
Any dihedral/cyclic group with elements of order both $3$ and $5$ has an element of order $15$.
As $p \equiv \pm 1 \pmod{10}$, $\D_{10}$ is the only intersecting subgroup which is isomorphic to a dihedral group.
Now using Theorem~\ref{sub}, we conclude that $\A_{5}$ is the only maximum intersecting subgroup.
Therefore strict-weak-EKR property is true in this case.

{\bf Case 5:} Let $p\equiv\pm 3 \pmod {8}$ and $H=\A_{4}$.
By Lemma \ref{pi}, a subgroup $S$ is an intersecting subgroup if and only if
each of its non-identity element is an element whose order is either $2$, or $3$.
Consider a dihedral group of order $2n>6$, with $3\mid n$. Such a group has an element of order $6$. Thus by Lemma~\ref{pi} and Theorem~\ref{sub}, no dihedral subgroup of $G$ is an intersecting subgroups. Using similar arguments on other subgroups described in Theorem \ref{sub}, we can conclude that strict-weak-EKR property holds in this case.

{\bf Case 6:} Let $p\equiv\pm 1 \pmod {8}$ and $H=\rm{S}_{4}$.
By Lemma \ref{pi}, a subgroup $S$ is an intersecting subgroup if and only if
each of its non-identity element is an element whose order is either $2$, $3$, or $4$.
Any dihedral/cyclic group with elements of order both $3$ and $4$ has an element of order $12$.
Also any dihedral group whose order is $4k$ with $k>2$  has an element of $8$.
Using similar order arguments to the subgroups described in Theorem \ref{sub},
we can conclude that strict-weak-EKR property holds in this case. This concludes the proof of Theorem~\ref{PSLEKR}. \qed

We now conclude the section with some observations. Let $p\geqslant 5$ be a prime and $G=\PSL(2,p)$ be a primitive permutation group on $\Ome$, with $G_{\o} \cong \D_{p+1}$ (with $\o \in \Ome$). The group $\tilde{G}=\PGL(2,p)$ also acts primitively on $\Ome$ with $\tilde{G}_{\o}\cong \D_{2(p+1)}$. Now consider the following example.
\begin{example}\label{PGLexam}
\rm{
Let $q$ be the power of an odd prime. Let $t$ be an integer such that $2^{t} || (q-1)$.
Consider $\tilde{G}=\PGL(2,q)$ and its subgroup $\tilde{H}$ generated by $\left(\begin{matrix}
0 & 1\\
\beta^{q+1} & 0
\end{matrix} \right)$ and $\begin{pmatrix}
 0 & 1 \\
 -\beta^{(1+q)} & \beta+\beta^{q}
 \end{pmatrix}$, where $\beta \in \mathbb{F}_{q^{2}}^{\times}$ is an element of order $2^{t}(q+1)$. We note that $\tilde{H}$ is isomorphic to $\D_{2(q+1)}$.

Let $F$ be the field of order $q$, and $t$ be such that $2^{t}|| (q-1)$. Let $M<F^{\times}$ be the unique multiplicative subgroup of index $2^{t}$. Let $\delta$ denote a primitive $2^{t}$-th root of unity and set
$\tilde{M}:=\bigcup\limits_{i=0}^{2^{t-1}-1}\delta^{i}M$. Now the set
\[\tilde{C}=\left\{\begin{pmatrix}
1 & b \\
0 & a
\end{pmatrix}\ \vert\ a\in \tilde{M}\ \&\ b \in F \right\}\]
} is a transversal for the cosets of $H$. We also note that for any $c_{1},c_{2} \in \tilde{C}$, the ratio $c_{1}c_{2}^{-1}$ is not conjugate to an element of $H \setminus \{1\}$. Thus $\tilde{C}$ is a sharply transitive subset and thus by Lemma~\ref{regekr}, the action of $\tilde{G}$ on $[\tilde{G}:\tilde{H}]$ has the EKR property. We note that this is the primitive action of $\PGL(2,q)$ on  pairs of imaginary points (that is, points of $\PG(1,q^{2})$ that do not lie in $\PG(1,q)$).

When $q \equiv 3\pmod{4}$, we observe that $\tilde{C} \subset G=\PSL(2,q)$. By arguments similar to those above, we see that the primitive action of $G$ on $[G:H]$, where $H=\tilde{H}\cap G \cong \D_{q+1}$, also satisfies the EKR property.
\qed
\end{example}

We show in Example~\ref{PGLexam} that $\tilde{G}$ has the EKR property. In the same example, we also see that $G$ satisfies the EKR property in the case $q\equiv 3\pmod{4}$. From Theorem~\ref{PSLEKR}, we know that $G$ has the strict-weak-EKR property if and only if $p\neq 29$. All maximum intersecting subgroups of $\PSL(2,29)$ are isomorphic to $\A_{5}$. We wonder if for $p\neq 29$, the group $G$ has the strict-EKR property. This example leads us to the following general question:
\begin{problem}
{\rm
Let $G\leqslant \Sym(\Omega)$ be a transitive group.
\begin{enumerate}[(a)]
\item Let $X$ be the set of maximum intersecting sets in $G$. Does there exist a subgroup $H \leqslant G$ such that $H \in X$?
\item If $G$ has the weak-EKR property, does it also have the EKR property?
\end{enumerate}
}
\end{problem}
We do not study this problem in this paper, but believe that this would be an interesting direction for further research.

\section{Intersecting sets of Frobenius groups}\label{FR}
We conclude this article by giving an alternate proof of Lemma~\ref{regekr} (which is equivalent to Corollary 2.2 of \cite{AM2015}). As a byproduct of our proof, we get the following theorem that describes all the intersecting sets in a Frobenius group.\begin{theorem}\label{frobekr}
Let $G$ be a Frobenius group with $K \leqslant G$ as its Frobenius kernal and $H \leqslant G$ as its a Frobenius complement. Then the following are true:
\begin{enumerate}[{\rm(i)}]
\item $S \subset G$ is an intersecting set if and only if there exists a set $\{H_{c}|\ c \in K\}$ of pairwise disjoint subsets of $H$ (that is $H_{c}\subset H$ and $H_{c}\cap H_{d} =\emptyset$ for all $c, d \in K$ with $c\neq d$) such that $S=\bigcup\limits_{c \in K} H_{c}c$; and
\item $G$ satisfies the strict-EKR property if and only if $|H|=2$.
\end{enumerate}
\end{theorem}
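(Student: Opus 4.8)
\textit{Proof proposal.}
The plan is to carry out everything inside the Frobenius group $G$ acting on $\Ome=[G:H]$, using only these standard facts about Frobenius groups: the multiplication map $H\times K\to G$, $(h,c)\mapsto hc$, is a bijection (so every $g\in G$ has a unique factorization $g=hc$ with $h\in H$, $c\in K$); the reduction $G\to G/K$ restricts to an isomorphism $H\cong G/K$ and sends $hc$ to the class of $h$; the point stabilizers of the action on $[G:H]$ are precisely the conjugates $H^g$, each of order $|H|$; an element of $G$ fixes a point of $\Ome$ if and only if it is the identity or lies in some $H^g$, which for a Frobenius group is equivalent to \emph{not} lying in $K\setminus\{1\}$; and distinct conjugates of $H$ meet in $\{1\}$ (in particular $H^g\cap H=1$ for $g\notin H$). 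Granting these, I would first prove (i) together with the EKR inequality, then dispatch the two directions of (ii).

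For (i), to an arbitrary subset $S\subseteq G$ I attach, for each $c\in K$, the subset $H_c:=\{h\in H:\ hc\in S\}$ of $H$. By uniqueness of the factorization $g=hc$ one has $S=\bigcup_{c\in K}H_cc$ with the pieces pairwise disjoint, and this is the only such family; so the substance of (i) is the equivalence ``$S$ intersecting $\iff$ the $H_c$ are pairwise disjoint as subsets of $H$''. The key computation is to reduce a ratio modulo $K$: for $hc,h'c'\in G$ the image of $(hc)(h'c')^{-1}=h\,cc'^{-1}\,h'^{-1}$ in $G/K$ coincides with the image of $hh'^{-1}$, so $(hc)(h'c')^{-1}\in K$ exactly when $h=h'$, and in that case $(hc)(hc')^{-1}=h(cc'^{-1})h^{-1}$ is trivial exactly when $c=c'$. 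Hence $(hc)(h'c')^{-1}$ lies in $K\setminus\{1\}$, i.e. fixes no point, precisely when $h=h'$ and $c\neq c'$; equivalently, $S$ fails to be intersecting precisely when some $h\in H$ lies in two distinct $H_c$'s. (Alternatively the forward implication follows from Lemma~\ref{regekr} applied to the regular subgroup $K$, but the argument above is self-contained, which is the point of this section.) As an immediate corollary, $|S|=\sum_c|H_c|\leq|H|$ for every intersecting set, so $G$ has the EKR property and the maximum intersecting sets are exactly those $S$ for which the $H_c$ partition $H$.

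For (ii), suppose first $|H|=2$ and let $S=\{s_1,s_2\}$ be a maximum intersecting set, so $|S|=2$. Since $S$ is intersecting, $s_2s_1^{-1}$ fixes a point; being nontrivial it is the involution of some conjugate $H^g$, so $Ss_1^{-1}=\{1,s_2s_1^{-1}\}=H^g$ is a point stabilizer and $S=H^gs_1$ is a coset of one; hence $G$ has the strict-EKR property. Conversely suppose $|H|\geq 3$, fix $t\in H\setminus\{1\}$ and $c\in K\setminus\{1\}$, and put $S:=(H\setminus\{t\})\cup\{tc\}$. By (i), with $H_1=H\setminus\{t\}$, $H_c=\{t\}$ and the other $H_{c'}$ empty, $S$ is an intersecting set of size $|H|$, hence maximum. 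But $1\in S$, so if $S$ were a coset of a point stabilizer it would equal that stabilizer, i.e. be a subgroup of order $|H|$; then $S\cap H$ would be a subgroup of $H$ of order at least $|H|-1$, which forces $S\cap H=H$ since $|H|-1$ does not divide $|H|$ when $|H|\geq 3$; thus $H\subseteq S$ and $S=H$, contradicting $tc\in S\setminus H$. So this maximum intersecting set is not a coset of a point stabilizer, and $G$ does not have the strict-EKR property.

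I do not expect a genuine obstacle: the proof is essentially the bookkeeping of the factorization $G=HK$ plus the reduction of ratios modulo $K$ in part (i). The one place that wants a little care is the meaning of ``coset of a point stabilizer'' in part (ii): one should note that left and right cosets of point stabilizers form the same family (a left coset $gG_\alpha$ equals the right coset $(gG_\alpha g^{-1})g$, and $gG_\alpha g^{-1}$ is again a point stabilizer), so that a coset of a point stabilizer which contains the identity is itself a point stabilizer; this is exactly what makes the contradiction in the converse of (ii) go through, and without it one would have to chase left versus right cosets separately.
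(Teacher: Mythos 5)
Your proposal is correct and follows essentially the same route as the paper: the decomposition $S=\bigcup_{c\in K}H_cc$ coming from the factorization $G=HK$, the fact that a ratio fails to fix a point exactly when it lies in $K\setminus\{1\}$, and the same two constructions in part (ii) (the two-element coset for $|H|=2$ and the set $(H\setminus\{t\})\cup\{tc\}$ for $|H|\geqslant 3$). The only differences are organizational: you prove both directions of (i) in one stroke via reduction mod $K$, whereas the paper gets the ``only if'' direction by invoking Lemma~\ref{regekr} with $C=K$; and for $|H|\geqslant 3$ you exclude $S$ being a coset by a Lagrange argument on the subgroup $S\cap H$, where the paper argues via the fixed points of the exceptional element.
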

Part~(i) of this theorem may be deduced from Theorem 3.6 of \cite{AM2015} and part~(ii) is equivalent to Theorem 3.7 of \cite{AM2015}. The proofs in \cite{AM2015} rely on techniques from algebraic graph theory and representation theory. We will prove this Theorem as an application of Lemma \ref{regekr}.

\subsection{Proof of Lemma \ref{regekr}.}\label{pre}
We are given a finite permutation group $G$ on $\Omega$ and $\o \in \Omega$, such that $G$ contains a sharply transitive subset $C$. Recall that a subset $C$ of a permutation group $G \leq \Sym(\Ome)$ is said to be {\it sharply transitive} if for any $(\alpha,\beta) \in \Ome \times \Ome$ there is exactly one $c \in C$ such that $\alpha^{c}=\beta$. 

Consider $c_{1}, c_{2} \in C$ and $\alpha \in \Ome$ such that $\alpha^{c_{1}c_{2}^{-1}}=\alpha$. So we have $\alpha^{c_{1}}=\alpha^{c_{2}}$. As $C$ is a sharply transitive set, we must have $c_{1}=c_{2}$. Thus for distinct $c_{1},c_{2} \in C$, the ratio $c_{1}c_{2}^{-1}$ is a fixed point free permutation.
Moreover for any $\o \in \Ome$, $C$ is a set of right coset representatives for $G_{\o}$ in $G$. We thus have the decomposition
\[G= \bigcup\limits_{c \in C}G_{\o}c.\]

Let $S$ be an intersection set in $G$ and define $S_{c}=S \cap G_{\o}c$. So $S$ is a disjoint union of $S_{c}$'s, that is,
\[S= \bigcup\limits_{c \in C}S_c.\]

Now we consider the translations of $S_{c}$'s into $G_{\o}$, that is, the sets $H_{c}:=S_{c}c^{-1}$ for $c\in C$.
We then have
\begin{equation}\label{eq1}
|S|=\sum\limits_{c \in C}|S_{c}|=\sum\limits_{c \in C}|H_{c}|.
\end{equation}
Let $c,d \in C$ such that $H_{c} \cap H_{d}\neq \emptyset$. 
Let  $h \in H_{c}\cap H_{d}\subset G_{\o}$. 
So we have an element $x \in S_{c}\subset S$ and an element $y \in S_{d} \subset S$ such that $xc^{-1}=h=yd^{-1}$. Therefore, $cd^{-1}=xh^{-1}hy^{-1}=xy^{-1}$. 
Since $x,y$ are elements of an intersecting set, $xy^{-1}$ fixes a point.
As $c,d$ are elements of a sharply transitive set, $cd^{-1}$ fixes a point if and only if $c=d$. 
Thus $\{H_{c}\ |\ c \in C\}$ is a set of pairwise disjoint subsets of $G_{\o}$ such that 
$S=\bigcup\limits_{c \in C}H_{c}c$. 
So part~(a) is true. 
Now by \eqref{eq1}, we have $|S| \leqslant |H|$, and thus $G$ has EKR property.
\qed

\subsection{Proof of Theorem \ref{frobekr}.}
We may assume that $G$ is a Frobenius group acting on $K$. In this case, $H=G_{1}$, the stabilizer of the identity $1$ of $G$.
Let $\{H_{c}| c \in K\}$ be a set of pairwise disjoint subsets of $H$. Consider $S= \bigcup\limits_{c\in K} H_{c}c$. Consider elements $x,y \in \S$ such that $x \in H_{a}a$ and $y \in H_{b}b$, for some $a,b \in K$. If $a=b$, then clearly $xy^{-1}\in H_{a}\subset H$, and thus $xy^{-1}$ fixes a point. Now we assume that $a\neq b$. Then for some $h_{1}\in H_{a}$, $h_{2} \in  H_{b}$, we have \[xy^{-1}=h_{1}ab^{-1}h_{2}^{-1}=h_{1}ab^{-1}h_{1}^{-1}h_{1}h_{2}^{-1}.\] Now $H_{a}$ and $H_{b}$ are disjoint and hence $h_{1}h_{2}^{-1} \neq 1$. Also as $K$ is a normal subgroup, $h_{1}ab^{-1}h_{1}^{-1}h \in K$. Thus $xy^{-1}=hk$, for some $h\in H \setminus \{1\}$ and $k \in K \setminus \{1\}$. Therefore $xy^{-1}\notin K$. Since $G$ is a Frobenius group, every element not in $K$ fixes a point, and thus $xy^{-1}$ fixes a point. We have proved that $S$ is an intersecting set. Now application of Lemma \ref{regekr} leads to part~(i).

We now prove part~(ii).
Let us first assume that $|H|=2$ and $H=\{1,h\}$, it follows from part~(i) that every maximum intersecting set of $G$ is of the form $\{k_{1},hk_{2}\}$, for some $k_{1},k_{2}\in K$. This is a coset of the subgroup $\{1,hk_{2}k_{1}^{-1}\}$, which is the point stabilizer as $hk_{2}k_{1}^{-1}\notin K$. Therefore $G$ has the strict-EKR property in this case.

Let us now assume that $|H|\geqslant 3$. Let us pick $c \in K\setminus \{1\}$ and $h \in H \setminus\{1\}$. The by part~(i), $S= H\setminus\{h\} \cup \{ch\}$ is an intersecting subset. As $G$ is a frobenius group, and $ch \notin K$, the element $ch$ fixes only one point. As $ch \notin H$,  it does not fix $1$. Since every other point of $S$ fixes $1$, the set $S$ cannot be a coset of a point stabilizer.
\qed
We end this section with an example of a non-Frobenius group, in which we find intersecting sets that are not cosets of subgroup. In order to do so we use the description of intersecting sets given in Lemma \ref{regekr}.
\begin{example}
{\rm
Let $G=\SL(2,3)=\Q_8{:}\ZZ_3$, and $H=\ZZ_3$.
Then $G$ is not a Frobenius group.
Let $\Ome=[G:H]$.
Then $\Q_8$ acting on $\Ome$ is a regular subgroup, and hence $G$ has the EKR-property on $\Ome$ by Lemma~\ref{regekr}.

Let $H=\l h\r$ and $a\in K=\Q_8$ be of order 4.
Let $H=\{1\}\cup\{h\}\cup\{h^{-1}\}$, and
\[S=1\{1\}\cup a\{h\}\cup a^{-1}\{h^{-1}\}=\{1,ah,a^{-1}h^{-1}\}.\]
Then $S$ is not a coset of any subgroup of $G$, and so $G$ does not have the strict EKR-property.
}
\qed
\end{example}
\section{Proof of Theorem~\ref{ps}.}\label{pp}
In this section, we show that all permutation groups of prime power degree satisfy the EKR property. 

At first, we consider transitive permutation groups which are $p$-groups. We show that such groups satisfy EKR property. We recall from Section \ref{prel} that a sharply transitive set of a permutation group $K \leq \mathrm{Sym}(\Delta)$ is a set $C$ such that for every $\alpha,\beta \in \Delta$, there is a unique $c \in C$ such that $\alpha^{c}=\beta$. Lemma \ref{regekr} states that the existence of such a set implies EKR property. The following lemma shows the existence of sharply transitive sets in all transitive permutation $p$-groups 
\begin{lemma}
Let $p$ be a prime, then every transitive permutation group $p$-group has a sharply transitive set.
\end{lemma}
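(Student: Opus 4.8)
The plan is to argue by induction on the degree. Since $K$ is transitive, $|\Delta|$ divides $|K|$, so $|\Delta|=p^n$ for some $n\geq 0$; I will induct on $n$. The case $n=0$ is trivial (take $C=\{1\}$). For the inductive step the key idea is to assemble the sharply transitive set from two ingredients: a central cyclic subgroup of order $p$ acting \emph{within} blocks, and a lifted sharply transitive set coming from the action \emph{on} blocks.

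Concretely, since $K$ is a nontrivial $p$-group, $Z(K)\neq 1$, so we may pick $z\in Z(K)$ of order $p$. As $\langle z\rangle\trianglelefteq K$ and $K$ is transitive, the $\langle z\rangle$-orbits form a block system $\mathcal{B}$; each block has size dividing $p$, and the size cannot be $1$ (otherwise $z$ would fix every point and hence be trivial by faithfulness), so every block has size exactly $p$ and $\langle z\rangle$ acts regularly on it. Moreover $z$ fixes every block setwise, so it acts trivially on $\mathcal{B}$. The group $\overline{K}\leq\Sym(\mathcal{B})$ induced by $K$ on the blocks is a quotient of $K$, hence a transitive $p$-group, of degree $p^{n-1}$, so by the inductive hypothesis it has a sharply transitive set; choosing one preimage in $K$ of each of its elements yields a subset $\bar C\subseteq K$ with $|\bar C|=p^{n-1}$ such that for every ordered pair of blocks $(B,B')$ there is exactly one $c\in\bar C$ with $B^c=B'$. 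I then propose to take
\[
C=\{\, z^k c \;:\; 0\leq k\leq p-1,\ c\in\bar C \,\}.
\]

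To finish I would check two things. First, the $p^n$ products $z^kc$ are pairwise distinct: on $\mathcal{B}$, $z^kc$ and $z^{k'}c'$ act as $c$ and $c'$ do, so equality in $K$ forces $c,c'$ to induce the same permutation of $\mathcal{B}$, whence $c=c'$ (distinct members of $\bar C$ induce distinct permutations of $\mathcal{B}$), and then $z^k=z^{k'}$ gives $k=k'$; thus $|C|=p^n=|\Delta|$. Second, and this is the heart, I would verify the defining property of a sharply transitive set directly: given $\alpha,\beta\in\Delta$, any element $z^kc$ sends $\alpha$ into the block $B_\alpha^{\,c}$ (with $B_\alpha$ the block containing $\alpha$), so $\alpha^{z^kc}=\beta$ forces $B_\alpha^{\,c}=B_\beta$, which pins down $c\in\bar C$ uniquely; the requirement then becomes $\alpha^{z^k}=\beta^{c^{-1}}$, an equation between two points lying in the single block $B_\alpha$, on which $\langle z\rangle$ acts regularly, so it has a unique solution $k\in\{0,\dots,p-1\}$. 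Hence exactly one pair $(k,c)$, and by the first point exactly one element $z^kc\in C$, carries $\alpha$ to $\beta$, which is exactly the assertion that $C$ is sharply transitive; the lemma follows by induction.

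The step I expect to be the main obstacle is precisely the one this construction is designed to circumvent. If one instead tried to glue together, block by block, a sharply transitive set on a single block of a block system, the associated ``difference'' permutations need not be fixed-point-free on the \emph{other} blocks, so global sharp transitivity fails. Routing the within-block part of $C$ through the one central subgroup $\langle z\rangle$ — which acts regularly, and in the same manner, on every block simultaneously — eliminates any such clash and reduces the whole problem to the action on the strictly smaller set $\mathcal{B}$, where the inductive hypothesis applies.
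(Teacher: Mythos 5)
Your proof is correct and follows essentially the same route as the paper's: induct on the degree, use a central (hence semiregular) subgroup to form a block system, lift a sharply transitive set from the induced action on blocks, and multiply by the central subgroup to fill in the blocks. The only cosmetic difference is that you use a single central element of order $p$ where the paper uses the full centre $Z(P)$, and you supply the verification of sharp transitivity that the paper leaves implicit.
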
      
\begin{proof}
Let us consider a fixed prime $p$, the the Sylow-$p$-subgroup of $\mathrm{S}_{p}$ is a regular subgroup and thus $p$-subgroups of $\mathrm{S}_{p}$ have a sharply transitive subset. Let $k$ be any integer, and assume that for $n<k$, every transitive $p$-subgroup of $\mathrm{S}_{p^{n}}$ has a sharply transitive set. 
Let $P\leq \mathrm{S}_{p^{k}}$ be a transitive $p$-subgroup. Let $Z$ be the centre of $P$. As the centre of a $p$-group is non-trivial, $Z$ is a non-trivial normal subgroup  of $P$. As stabilizer of $P$ is core-free, $Z$ acts semi-regularly. Thus the action of $Z$ has $p^{s}:=|p^{k}|/|Z|$ orbits. As $Z$ is non-trivial, we have $s<k$.

If $s=0$, then $Z$ is a regular subgroup, and thus $P$ has a sharply transitive set. 

If $s>1$, consider the set $\Lambda$ of $Z$-orbits. This forms a block system for the action of $P$, and thus $P$ acts transitively on $\Lambda$. Let $N$ be the kernal of this action, clearly $Z\leq N$. 

 Let $R:=P/N \leq \mathrm{Sym}(\Lambda)\cong \mathrm{S}_{p^{s}}$ be the permutation induced by the action of $P$. By our assumption, $R$ has a sharply transitive set, say $C_{0}=\{g_{1}N,\ldots g_{p^{s}}N\}$. Now $C=\bigcup\limits_{i=1}^{p^{s}} g_{i}Z \subset P$ is a sharply transitive subset of $P$.     
\end{proof}
Let $G \leq \mathrm{S}_{p^{k}}$ be any transitive group, and let $H$ be a stabilizer. Pick a Sylow-$p$ subgroup $O$ of $H$, and pick a Sylow-$p$ subgroup $P$ of $G$ such that $O <P$. We have \[p^{k}=[G:H]= \dfrac{[G:P]}{[H:O]}[P:O].\] As both $[G:P]$ and $[H:O]$ are co-prime with $p$, we have $[P:O]=p^{k}$. As $H\cap P= O$, we conclude that $P$ acts transitively. Now from the above Lemma, $P$ contains a sharply transitive subset, and thus so does $G$. This shows that $G$ satisfies the EKR property. We have now proved Theorem~\ref{ps}.
 \qed

\bibliographystyle{plain}

\end{document}